\newcommand{\MZ}{\mathbb{Z}}
\newcommand{\vk}{\varkappa}
\newcommand{\BR}{\mathbb{R}}
\newcommand{\SL}{\sum\limits}
\newcommand{\be}{\beta}
\newcommand{\de}{\delta}
\newcommand{\La}{\Lambda}
\newcommand{\ME}{\mathbf E}
\newcommand{\CF}{\mathcal F}
\newcommand{\MP}{\mathbf P}
\newcommand{\CA}{\mathcal A}
\newcommand{\CL}{\mathcal L}
\newcommand{\CS}{\mathcal S}
\newcommand{\CD}{\mathcal D}
\newcommand{\CZ}{\mathcal Z}
\newcommand{\CX}{\mathcal X}
\newcommand{\Oa}{\Omega}
\newcommand{\si}{\sigma}
\newcommand{\pa}{\partial}
\renewcommand{\phi}{\varphi}
\newcommand{\la}{\lambda}
\newcommand{\fn}{\mathfrak{n}}
\newcommand{\Ra}{\Rightarrow}
\newcommand{\ol}{\overline}
\newcommand{\CM}{\mathcal M}
\newcommand{\norm}[1]{\lVert#1\rVert}
\renewcommand{\comment}[1]{}
\newcommand{\md}{\mathrm{d}}
\newcommand{\munit}{\mathbf{1}}
\newcommand{\mP}{\mathbf{p}}
\DeclareMathOperator{\SRBM}{SRBM}
\DeclareMathOperator{\diag}{diag}
\DeclareMathOperator{\tr}{tr}
\DeclareMathOperator{\TV}{TV}
\DeclareMathOperator{\mes}{mes}
\begin{document}

\theoremstyle{plain}
\newtheorem{thm}{Theorem}[section]
\newtheorem*{thmnonumber}{Theorem}
\newtheorem{lemma}[thm]{Lemma}
\newtheorem{prop}[thm]{Proposition}
\newtheorem{cor}[thm]{Corollary}
\newtheorem{open}[thm]{Open Problem}

\theoremstyle{definition}
\newtheorem{defn}{Definition}
\newtheorem{asmp}{Assumption}
\newtheorem{notn}{Notation}
\newtheorem{prb}{Problem}

\theoremstyle{remark}
\newtheorem{rmk}{Remark}
\newtheorem{exm}{Example}
\newtheorem{clm}{Claim}

\author{Andrey Sarantsev}

\title[Tail Estimates for the Stationary Distribution]{Reflected Brownian Motion in a Convex Polyhedral Cone:\\ Tail Estimates for the Stationary Distribution} 

\address{Department of Statistics and Applied Probability, University of California, Santa Barbara} 

\email{sarantsev@pstat.ucsb.edu}

\keywords{Reflected Brownian motion, Lyapunov function, tail estimate, generator, convex polyhedron, polyhedral cone, competing Brownian particles, symmetric collisions, gap process.}

\subjclass[2010]{Primary 60J60, Secondary 60J55, 60J65, 60H10, 60K35}

\date{March 31, 2016. Version 10}

\begin{abstract}
Consider an multidimensional obliquely reflected Brownian motion in the positive orthant, or, more generally, in a convex polyhedral cone. We find sufficient conditions for existence of a stationary distribution and convergence to this distribution at the exponential rate, as time goes to infinity. We also prove that certain exponential moments for this distribution are finite, thus providing a tail estimate for this distribution. Finally, we apply these results to systems of rank-based competing Brownian particles.
\end{abstract}

\maketitle

\section{Introduction} A multidimensional obliquely reflected Brownian motion in a convex polyhedron $D \subseteq \BR^d$ has been extensively studied in the past few decades. This is a stochastic process $Z = (Z(t), t \ge 0)$ which takes values in $D$; in the interior of $D$, it behaves as a Brownian motion, and as it hits the boundary $\pa D$, 
it is reflected inside $D$, but not necessarily normally. For every {\it face} $D_i$ of the boundary $\pa D$, there is a vector $r_i$ on this face, pointing inside $r_i$, which governs the reflection. If $r_i$ is the inward unit normal vector to $D$, then this reflection is {\it normal}; otherwise, it is {\it oblique}. Special care should be taken for the reflection at the intersection of two or more faces. A formal definition is given in Sect. 2. 

One particularly important case is the {\it positive orthant} $D = \BR^d_+$, where $\BR_+ := [0, \infty)$. The concept of a semimartingale reflected Brownian motion (SRBM) in the orthant was introduced in \cite{HR1981a}, as a diffusion limit for series of queues, when traffic intensity at each queue tends to one (the so-called {\it heavy traffic limit}). Later, it was applied in the theory of competing Brownian particles (systems of rank-based Brownian particles, when each particle has drift and diffusion coefficients depending on its current ranking relative to the other particles), see \cite{BFK2005, Ichiba11}. The gap process (vector of gaps, or spacings, between adjacent particles) turns out to be an SRBM in the orthant. 

We refer the reader to the comprehensive survey \cite{Wil1995} about an SRBM in the orthant. Reflected Brownian motion in a convex polyhedron was introduced in \cite{DW1995} in a semimartingale form: {\it semimartingale reflected Brownian motion}, or an SRBM. The paper \cite{DW1995} contains a sufficient condition for weak existence and uniqueness in law; it is stated in Sect. 2. 



In this paper, we assume that the condition mentioned above holds; then an SRBM in a convex polyhedron $D$ exists and is unique in the weak sense, and versions of this SRBM starting from different points $x \in D$ form a Feller continuous strong Markov family. 

Of particular interest is a stationary distribution for an SRBM in a convex polyhedron $D$: a probability distribution $\pi$ on $D$ such that if $Z(0) \sim \pi$, then $Z(t) \sim \pi$ for all $t \ge 0$. This was a focus of extensive research throughout the last four decades. For the orthant (and, more generally, for a convex polyhedron), a necessary and sufficient condition for existence of a stationary distribution is not known. However, there are fairly general sufficient conditions and necessary conditions for the orthant, see \cite{DW1994, Chen1996}. For dimensions $d = 2$ and $d = 3$, a necessary and sufficient condition is actually known, see \cite{HR1993} and \cite[Appendix A]{HH2009} for $d = 2$, and \cite{BDH2010} for $d = 3$. For a convex polyhedron (more specifically, a convex polyhedral cone), see \cite{ABD2001, BD1999} for sufficient condition for existence of a stationary distribution. It was shown in \cite{DK2003} that if a stationary distribution exists, then it is unique. 

Exact form of this stationary distribution is known only in a few cases, the most important of which is the so-called {\it skew-symmetry condition}. Under this condition, the stationary distribution has a product-of-exponentials form, see \cite{HW1987a, Wil1995}. Other known cases (sums of products of exponentials) are studied in \cite{DM2009}. A necessary and sufficient condition for a probability distribution to be stationary is a certain integral equation, called the {\it Basic Adjoint Relationship}. However, it is not known how to solve this equation in the general case. 

We complement the results above by finding some new conditions for existence of a stationary distribution for an SRBM in the orthant and in a convex polyhedral cone. To this end, we find a {\it Lyapunov function}: this is a function $V : D \to [1, \infty)$ such that
for some constants $k, b > 0$ and a set $C \subseteq D$ which is ``small'' in a certain sense (we specify later what this means; for now, it is sufficient to take a compact $C$) the process
$$
V(Z(t)) - V(Z(0)) - \int_0^t\left(-kV(Z(s)) + b1_C(Z(s))\right)\md s
$$
is a supermartingale. This is a more general definition than is usually used (with the generator of an SRBM). Under some additional technical conditions (so-called irreducibility and aperiodicity, more on this later), if such function $V$ can be constructed, then there exists a unique stationary distribution $\pi$, and the SRBM $Z = (Z(t), t \ge 0)$ converges weakly to $\pi$ as $t \to \infty$; moreover, the convergence is exponentially fast in $t$. There is an extensive literature on Lyapunov functions and convergence, see \cite{MeynBook} for discrete-time Markov chains and \cite{MT1993a, MT1993b, DMT1995, DFG2009, FR2005} for continuous-time Markov processes. These methods were applied to an SRBM in the orthant in \cite{DW1994, BL2007} and to an SRBM in a convex polyhedral cone in \cite{ABD2001, BD1999}. However, in these articles they construct a Lyapunov function indirectly. In this article, we come up with an explicit formula:
$$
V(x) = e^{\la\phi(U(x))},\ \ U(x) := [x'Qx]^{1/2},\quad x \in D,
$$
where $Q$ is a $d\times d$ symmetric matrix such that $x'Qx > 0$ for $x \in D\setminus\{0\}$, $\la > 0$ is a certain constant (to be determined later), $\phi : \BR_+ \to \BR_+$ is a $C^{\infty}$ function with 
\begin{equation}
\label{eq:phi}
\phi(s) := \begin{cases}
0,\ s \le s_1;\\
s,\ s \ge s_2,
\end{cases}
\ \ \mbox{for some}\ \ 0 < s_1 < s_2.
\end{equation}
We can also conclude that $\int_DV(x)\pi(\mathrm{d}x) < \infty$. This explicit form of the function $V$ allows us to find tail estimates for the stationary distribution $\pi$. Let us also mention the paper \cite{HH2009}, which studies tail behavior of $\pi$ in case $d = 2$. 

The paper is organized as follows. In Sect. 2, we introduce all necessary concepts and definitions, explain the connection between Lyapunov functions, existence of a stationary distribution, and exponential convergence. In Sect. 3, we state the main result and compare it with already known conditions for existence of a stationary distribution; then, we prove this main result. Sect. 4 is devoted to systems of competing Brownian particles.

\subsection{Notation} We denote by $I_k$ the $k\times k$-identity matrix, and by $\munit$ the vector $(1, \ldots, 1)'$ (the dimension is clear from the context). For a vector $x = (x_1, \ldots, x_d)' \in \BR^d$, let $\norm{x} := \left(x_1^2 + \ldots + x_d^2\right)^{1/2}$ be its Euclidean norm. The norm of a $d\times d$-matrix $A$ is defined by 
$$
\norm{A} = \max\limits_{\norm{x} = 1}\norm{Ax} = \max\{\sqrt{\la}\mid \la \ \ \hbox{is an eigenvalue of}\ \ A'A\}.
$$
For any two vectors $x, y \in \BR^d$, their dot product is denoted by $x\cdot y = x_1y_1 + \cdots + x_dy_d$. As mentioned before, we compare vectors $x$ and $y$ componentwise: $x \le y$ if $x_i \le y_i$ for all $i = 1, \ldots, d$; $x < y$ if $x_i < y_i$ for all $i = 1, \ldots, d$; similarly for $x \ge y$ and $x > y$. This includes infinite-dimensional vectors from $\BR^{\infty}$. We compare matrices of the same size componentwise, too. For example, we write $x \ge 0$ for $x \in \BR^d$ if $x_i \ge 0$ for $i = 1, \ldots, d$; $C = (c_{ij})_{1 \le i, j \le d} \ge 0$ if $c_{ij} \ge 0$ for all $i$, $j$. 
Fix $d \ge 1$, and let $I \subseteq \{1, \ldots, d\}$ be a nonempty subset. Write its elements in the order of increase: $I = \{i_1, \ldots, i_m\},\ \ 1 \le i_1 < i_2 < \ldots < i_m \le d$. For any $x \in \BR^d$, let $[x]_I := (x_{i_1}, \ldots, x_{i_m})'$. For any $d\times d$-matrix $C = (c_{ij})_{1 \le i, j \le d}$, let $[C]_I := \left(c_{i_ki_l}\right)_{1 \le k, l \le m}$. A one-dimensional Brownian motion with zero drift and unit diffusion, starting from $0$, is called a {\it standard Brownian motion}. The symbol $\mes$ denotes the Lebesgue measure on $\BR^d$. We write $f \in C^{\infty}(D)$ for an infinitely differentiable function $f : D \to \BR$. Take a measurable space $(\mathfrak X, \nu)$. For any measurable function $f : \mathfrak X \to \BR$, we denote $(\nu, f) := \int_{\mathfrak X}f\md\nu$. For a measurable function $f : \mathfrak X \to [1, \infty)$, define the norm
$\norm{\nu}_f := \sup\left|(\nu, g)\right|$, where the supremum is taken over all measurable functions $g : \mathfrak X \to \BR$ such that $|g(x)| \le f(x)$ for all $x \in \mathfrak X$. For $f = 1$, this is the {\it total variation norm}: $\norm{\nu}_{\TV}$.

\section{Definitions and Background}

\subsection{Definition of an SRBM in a Convex Polyhedron} 

Fix the dimension $d \ge 2$, and the number $m$ of edges. Take $m$ unit vectors $\fn_1, \ldots, \fn_m$, and $m$ real numbers $b_1, \ldots, b_m$. Consider the following domain:
$$
D := \{x \in \BR^d\mid x\cdot\fn_i \ge b_i,\ i  =1,\ldots, m\}.
$$
We assume that each {\it face} $D_i$ of the boundary $\pa D$:
$$
D_i := \{x \in \BR^d\mid x\cdot\fn_i = b_i,\ x\cdot\fn_j \ge b_j,\ j = 1, \ldots, m,\ j \ne i\},\quad i = 1,\ldots, m,
$$
is $(d-1)$-dimensional, and the interior of $D$ is nonempty. Then $D$ is called a {\it convex polyhedron}. For each face $D_i$, $\fn_i$ is the inward unit normal vector to this face. Define the following $d\times m$-matrix: $N = [\fn_1|\ldots|\fn_m]$. Now, take a vector $\mu \in \BR^d$ and a positive definite symmetric $d\times d$-matrix $A$. Consider also an $m\times d$-matrix $R = [r_1|\ldots|r_m]$, with $\fn_i\cdot r_i = 1$ for $i = 1, \ldots, m$. We are going to define a process $Z = (Z(t), t \ge 0)$ with values in $D$, which behaves as a $d$-dimensional Brownian motion with drift vector $\mu$ and covariance matrix $A$, so long as it is inside $D$; at each face $D_i$, it is reflected according to the vector $r_i$. First, we define its deterministic version: a {\it solution to the Skorohod problem}. 

\begin{defn} Take a continuous function $\CX : \BR_+ \to \BR^d$ with $\CX(0) \in D$. A {\it solution to the Skorohod problem in $D$} with {\it reflection matrix} $R$ and {\it driving function} $\CX$ is any continuous function $\CZ : \BR_+ \to D$ such that there exist $m$ continuous functions $\CL_1, \ldots, \CL_m : \BR_+ \to \BR_+$ which satisfy the following properties:

(i) each $\CL_i$ is nondecreasing, $\CL_i(0) = 0$, and can increase only when $\CZ \in D_i$; we can write the latter property as
$$
\CL_i(t) = \int_0^t1(Z(s) \in D_i)\md\CL_i(s),\quad t \ge 0;
$$

(ii) for all $t \ge 0$, we have:
$$
\CZ(t) = \CX(t) + R\CL(t), \hbox{where} \CL(t) = (\CL_1(t), \ldots, \CL_m(t))'.
$$

The function $\CL_i$ is called the {\it boundary term} corresponding to the face $D_i$. 
\end{defn}

For the rest of the article, assume the usual setting: a filtered probability space $(\Oa, \CF, (\CF_t)_{t \ge 0}, \MP)$ with the filtration satisfying the usual conditions. 

\begin{defn} Fix $z \in D$. Take an $((\CF_t)_{t \ge 0}, \MP)$-Brownian motion $W = (W(t), t \ge 0)$ with drift vector $\mu$ and covariance matrix $A$, starting from $z$. 
A continuous adapted $D$-valued process $Z = (Z(t), t \ge 0)$, which is a solution to the Skorohod problem in $D$ with reflection matrix $R$ and driving function $W$, is called a {\it semimartingale reflected Brownian motion} (SRBM) {\it in $D$}, with {\it drift vector} $\mu$, {\it covariance matrix} $A$, and {\it reflection matrix} $R$, {\it starting from} $z$. It is denoted by $\SRBM^d(D, R, \mu, A)$. For the case $D = \BR^d_+$, we denote it simply by $\SRBM^d(R, \mu, A)$. 
\label{defn:SRBM}
\end{defn}

We shall present a sufficient condition for existence and uniqueness taken from \cite{DW1995}. First, let us introduce a concept concerning the geometry of the polyhedron $D$. 

\begin{defn} For a nonempty subset $I \subseteq \{1,\ldots, m\}$, let $D_I := \cap_{i \in I}D_i$, and let $D_{\varnothing} := D$. A nonempty subset $I \subseteq \{1, \ldots, m\}$ is called {\it maximal} if $D_I \ne \varnothing$ and for $I \subsetneq J \subseteq \{1,\ldots, m\}$ we have: $D_J \subsetneq D_I$. 
\end{defn}

Now, let us define certain useful classes of matrices.

\begin{defn} Take a $d\times d$-matrix $M = (m_{ij})_{1 \le i, j \le d}$. It is called an {\it $\CS$-matrix} if for some $u \in \BR^d$, $u > 0$ we have: $Mu > 0$. It is called {\it completely-$\CS$} if for every nonempty $I \subseteq \{1,\ldots, d\}$ we have: $[M]_I$ is an $\CS$-matrix. It is called a $\CZ$-matrix if $m_{ij} \le 0$ for $i \ne j$. It is called a {\it reflection nonsingular $\CM$-matrix} if it is both a completely-$\CS$ and a $\CZ$-matrix with diagonal elements equal to one: $r_{ii} = 1$, $i = 1, \ldots, d$. It is called {\it strictly copositive} if $x'Mx > 0$ for $x \in \BR^d_+\setminus\{0\}$. It is called {\it nonnegative} if all its elements are nonnegative. 
\end{defn}

A useful equivalent characterization of reflection nonsingular $\CM$-matrices is given in \cite[Lemma 2.3]{MyOwn3}. Now, let us finally state the existence and uniqueness result, taken from \cite{DW1995}. 

\begin{prop} Assume that for every maximal set $I \subseteq \{1, \ldots, m\}$ the matrices $[N'R]_I$ and $[R'N]_I$ are $\CS$-matrices. Then for every $z \in D$, there exists a weak version of an $\SRBM^d(D, R, \mu, A)$, and it is unique in law. Moreover, these processes for $z \in D$ form a Feller continuous strong Markov family. 
\label{prop:weak-existence}
\end{prop}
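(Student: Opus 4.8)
This is Theorem~1.3 of \cite{DW1995}; I outline the argument one would reproduce. The assertion is local in character --- in the interior of $D$ the process is a plain Brownian motion, and all the work is near $\pa D$ --- so the plan is: (1) prove weak existence and uniqueness in law for the Skorohod-reflected process inside a small ball around an arbitrary point $x_0\in\pa D$; (2) build the global process by concatenating such local solutions along a sequence of exit times $0=\tau_0<\tau_1<\tau_2<\cdots$, where $\tau_{k+1}$ is the first time after $\tau_k$ that $Z$ leaves a fixed small ball around $Z(\tau_k)$, the driving Brownian motion being patched via its strong Markov property and each new piece of $Z$ being the local solution restarted at $Z(\tau_k)$; (3) check that the $\tau_k$ do not accumulate, so that the construction yields a process defined on all of $\BR_+$, continuous and adapted, and that it is insensitive to the choices made, which gives uniqueness in law.

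For step (1), put $I=\{i:x_0\in D_i\}$; this set is maximal, and near $x_0$ the domain $D$ agrees with $x_0+C_I$, where $C_I=\{y:y\cdot\fn_i\ge 0,\ i\in I\}$. If the normals $\{\fn_i:i\in I\}$ are linearly independent, an affine change of variables maps $C_I$ to $\BR_+^{|I|}\times\BR^{d-|I|}$, sends the driving Brownian motion to another nondegenerate Brownian motion, and sends $R$ to a reflection matrix whose $|I|\times|I|$ ``orthant block'' is conjugate to data assembled from $[N'R]_I$ and $[R'N]_I$. The key point is that the principal submatrices of this block that the orthant theory requires to be $\CS$-matrices are all associated with \emph{maximal} index sets: a maximal $I$ is closed under the implication $D_I\subseteq D_j\Rightarrow j\in I$, so every face of $C_I$ corresponds to a maximal subset of $I$. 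Hence the hypothesis --- that $[N'R]_J$ and $[R'N]_J$ are $\CS$-matrices for all maximal $J$ --- is exactly the completely-$\CS$ condition under which weak existence and uniqueness in law of an SRBM in an orthant are known (Taylor--Williams; see \cite{Wil1995}), the remaining $\BR^{d-|I|}$ directions being inert Brownian coordinates. If the normals at $x_0$ are dependent (e.g.\ at a vertex where more than $d$ faces meet), $C_I$ is not linearly isomorphic to an orthant and one argues directly in the cone: the $\CS$-property of $[N'R]_I$ lets one show that a penalization (or time-discretization) scheme produces a tight family of approximate reflected processes whose subsequential weak limit solves the Skorohod problem in $C_I$, while a submartingale-problem characterization --- together with the fact that the reflected process spends zero Lebesgue time on the union of the codimension-$\ge 2$ faces --- yields uniqueness in law.

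For step (3), non-accumulation of the $\tau_k$ follows from the deterministic \emph{oscillation inequality} for the Skorohod map in the cones $C_I$: if the driving path oscillates little over $[\tau_k,\tau_k+\de]$ then so does $Z$, so $Z$ cannot leave its ball too quickly; since Brownian paths are uniformly continuous on compacts, $\tau_k\uparrow\infty$ a.s. The pieces match continuously by construction and adaptedness is inherited. Uniqueness in law then propagates from local to global by the same localization: two SRBMs started at $z$ have the same law up to the first exit from a neighborhood of $z$, hence --- by the strong Markov property and induction on $k$ --- the same law up to each $\tau_k$, hence the same law. Uniqueness in law makes the family of SRBMs started from the various $z\in D$ a strong Markov family; Feller continuity comes from the fact that the Skorohod map depends continuously, uniformly on compact time intervals, on its driving path, combined with a coupling of the driving Brownian motions from nearby starting points.

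The hard part, as this outline makes plain, is the genuinely polyhedral --- as opposed to orthant --- content: verifying rigorously that the maximal-set $\CS$-conditions on $[N'R]_I$ and $[R'N]_I$ deliver, after the affine straightening, precisely the principal-submatrix information the orthant theory needs; and handling the degenerate faces (dependent normals at vertices), where the orthant results cannot be quoted verbatim and one must run the approximation and submartingale arguments inside the cone $C_I$ itself. Granting these two ingredients, the concatenation, the oscillation estimate, and the Feller property are routine.
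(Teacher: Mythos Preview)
The paper does not prove this proposition at all: it is stated as a result ``taken from \cite{DW1995}'' and simply quoted as background. So there is nothing to compare your argument against --- the paper's ``proof'' is a citation.

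Your outline is a reasonable high-level summary of the strategy in \cite{DW1995}, and you correctly identify it as their Theorem~1.3. That said, as a self-contained proof it has real gaps you yourself flag: the reduction to the orthant case when the active normals are independent requires checking carefully that the transformed reflection data satisfy the completely-$\CS$ condition (this is not quite ``conjugate to data assembled from $[N'R]_I$ and $[R'N]_I$'' --- the precise relation is more delicate), and the degenerate-vertex case where the normals are dependent is where most of the technical work in \cite{DW1995} lives. Your sketch of that case (``penalization scheme,'' ``submartingale-problem characterization,'' ``zero time on codimension-$\ge 2$ faces'') names the right ingredients but does not supply them. For the purposes of this paper, however, none of that matters: the result is imported wholesale from the literature, and a one-line citation is all that is required.
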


\begin{rmk} For a particular important case of the positive orthant: $D = \BR_+^d$, that is, when $m = d$, $\fn_i = e_i$ and $b_i = 0$ for $i = 1, \ldots, d$, we have: $N = I_d$, every nonempty subset $I \subseteq \{1, \ldots, d\}$ is maximal, and the condition from Proposition~\ref{prop:weak-existence} is equivalent to the matrix $R$ being completely ${\mathcal {S}}$ (because $R$ is completely ${\mathcal {S}}$ if and only if $R'$ is completely ${\mathcal {S}}$). This turns out to be not just sufficient but a necessary condition, see \cite{Wil1995}. 
\end{rmk} 

A sufficient condition for strong existence and pathwise uniqueness was found in \cite{HR1981a} for the orthant: $R$ must be a reflection nonsingular-$\CM$ matrix. 
 However, we shall not need strong existence and pathwise uniqueness in this paper. The generator of this process is given by
$$
\CA f(x) := \mu\cdot\nabla f(x) + \frac12\SL_{i=1}^d\SL_{j=1}^da_{ij}\frac{\pa^2f(x)}{\pa x_i\pa x_j},
$$
with the domain $\CD(\CA)$ containing the following subset of functions:
$$
\CD(\CA) \supseteq \{f \in C^{\infty}(D)\mid \left.r_i\cdot\nabla f(x)\right|_{x \in D_i} = 0,\ \ i = 1, \ldots, m\}.
$$

\subsection{Recurrence of Continuous-Time Markov Processes} Let us remind the basic concepts of recurrence, irreducibility and aperiodicity for continuous-time Markov processes. This exposition is taken from \cite{MT1993a, MT1993b, DFG2009, DMT1995, BL2007}. Take a locally compact separable metric space $\mathfrak X$ and denote by $\mathfrak B$ its Borel $\si$-field. 
Let 
$$
\left(\Oa, \CF, (\CF_t)_{t \ge 0}, (X(t), t \ge 0), (\MP_x)_{x \in \mathfrak X}\right)
$$
be a time-homogeneous Markov family, where $X(t)$ has continuous paths under each measure $\MP_x$. Denote by $P^t(x, A) = \MP_x(X(t) \in A)$ the transition function, and by $\ME_x$ the expectation operator corresponding to $\MP_x$. Denote by $P^tf$ and $\nu P^t$ the action of this transition semigroup on functions $f : \mathfrak X \to \BR$ and Borel measures on $\mathfrak X$. Take a $\si$-finite {\it reference measure} $\nu$ on $\mathfrak X$. The process $X$ is called {\it $\nu$-irreducible} if for $A \in \mathfrak B$ we have:
$$
\nu(A) > 0\ \ \Ra \ \ \ME_x\left[\int_0^{\infty}1_A(X(s))\md s\right] > 0\quad \hbox{for all}\ \ x \in \mathfrak X.
$$
If such measure exists, then there is a maximal irreducibility measure $\psi$ (every other irreducibility measure is absolutely continuous with respect to $\psi$), which is unique up to equivalence of measures. A set $A \in \mathfrak B$ with $\psi(A) > 0$ is {\it accessible}. A nonempty $C \in \mathfrak B$ is {\it petite} if there exists a probability distribution $a$ on $\BR_+$ and a nontrivial $\si$-finite measure $\nu_a$ on $\mathfrak B$ such that
$$
\forall x \in C,\ \ \int_0^{\infty}P^t(x, \cdot)a(\md t) \ge \nu_a(\cdot).
$$
Suppose that, in addition, this distribution $a$ is concentrated at one point $t > 0$: $a = \de_t$. Equivalently, there exists a $t > 0$ and a  nontrivial $\si$-finite measure $\nu_a$ on $\mathfrak B$ such that
$$
\forall x \in C,\ \ \int_0^{\infty}P^t(x, \cdot)a(\md t) \ge \nu_a(\cdot).
$$
Then the set $C$ is called {\it small}. The process is {\it Harris recurrent} if, for some $\si$-finite measure $\nu$, 
$$
\nu(A) > 0\ \ \Ra\ \ \int_0^{\infty}1_A(X(s))\md s = \infty\ \ {\mathbf {P}}_x-\mbox{a.s.}\quad \hbox{for all}\ \ x \in \mathfrak X.
$$
Harris recurrence implies $\nu$-irreducibility. A Harris recurrent process possesses an invariant measure $\pi$, which is unique up to multiplication by a constant. If $\pi$ is finite, then it can be scaled to be a probability measure, and in this case the process is called {\it positive Harris recurrent}. An irreducible process is {\it aperiodic} if there exists an accessible petite set $C$ and $T > 0$ such that for all $x \in C$ and $t \ge T$, we have: $P^t(x, C) > 0$.

\begin{defn} The process $X$ is called {\it $V$-uniformly ergodic} for a function $V : \mathfrak X \to [1, \infty)$ if it has a unique stationary distribution $\pi$, and there exists constants $K, \vk > 0$ such that for all $x \in \mathfrak X$ and $t \ge 0$ we have:
$$
\norm{P^t(x, \cdot) - \pi(\cdot)} \le KV(x)e^{-\vk t}.
$$
\end{defn}

Now, let us state a few auxillary statements. The next proposition was proved in \cite[Chapter 6]{MeynBook} for discrete-time processes, but the proof is readily transferred to continuous-time setting. 

\begin{prop} For a Feller continuous strong Markov family, every compact set is petite.
\label{prop:cpt-petite}
\end{prop}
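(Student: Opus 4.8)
The plan is to reduce the claim for compact sets to a statement about neighborhoods of single points, then to use Feller continuity together with a standard resolvent (Laplace-transform) argument to produce the required minorization. First I would recall that for an irreducible Markov process one has the following equivalence, due to Meyn and Tweedie in the discrete-time case and transferable to continuous time (see \cite{MT1993a,MT1993b}): a set $C$ is petite if and only if every subset of a \emph{finite union of petite sets} is petite, and that petiteness is preserved under taking subsets. Hence it suffices to show that each point $x_0 \in \mathfrak X$ has an open petite neighborhood $O_{x_0}$; by compactness of a given compact $K$, finitely many such neighborhoods cover $K$, and then $K$ itself, being a subset of a finite union of petite sets, is petite.

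Next I would construct the petite neighborhood of a point. Fix $x_0$. The idea is to use the resolvent kernel
\[
R_\be(x, \cdot) := \be\int_0^\infty e^{-\be t} P^t(x, \cdot)\,\md t,
\]
which for each $\be > 0$ is a probability kernel, and which is of the required sampled form $\int_0^\infty P^t(x,\cdot)\,a(\md t)$ with $a(\md t) = \be e^{-\be t}\,\md t$ an exponential (hence proper) probability distribution on $\BR_+$. So to prove that a set $C$ is petite it is enough to produce a single $\be > 0$ and a nontrivial $\si$-finite measure $\nu_a$ with $R_\be(x,\cdot) \ge \nu_a(\cdot)$ for all $x \in C$. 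Now, because $X$ has continuous paths and the family is Feller continuous, the map $x \mapsto R_\be(x, \cdot)$ is weakly continuous: if $f$ is bounded continuous then $x \mapsto P^t f(x)$ is continuous for each $t$, and dominated convergence in the $t$-integral gives continuity of $x \mapsto R_\be f(x)$. Fix any nonempty open set $G$ with compact closure; pick $t_0 > 0$ small enough and an open neighborhood $O_{x_0}$ of $x_0$ such that, by path-continuity and the Feller property, $\MP_x(X(s) \in G \text{ for all } s \le t_0)$ is bounded below by some $\eta > 0$ uniformly for $x \in O_{x_0}$ (this uses only that $X(0) = x$ is close to $x_0$ and the paths do not move far in short time — a tightness/Feller estimate). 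Integrating, $R_\be(x, G) \ge \be\int_0^{t_0} e^{-\be t}\,\md t \cdot \eta =: c > 0$ for all $x \in O_{x_0}$.

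The remaining step is to upgrade the uniform lower bound on the \emph{mass} $R_\be(x, G)$ to a uniform lower bound on the \emph{measures} $R_\be(x, \cdot)$, i.e.\ to find a genuine minorizing measure $\nu_a$; this is the main obstacle, since mass bounded below does not by itself give a common dominated-below measure. Here I would invoke the weak continuity of $x \mapsto R_\be(x, \cdot)$ more carefully, together with an argument that replaces one application of the kernel by two: consider $R_\be^2 = R_\be \circ R_\be$, which has the same sampled form with $a$ the convolution of two exponentials (still a proper distribution on $\BR_+$). By the previous paragraph, from each $x \in O_{x_0}$ the kernel $R_\be(x, \cdot)$ charges $G$ with mass at least $c$; since $\ol G$ is compact and $y \mapsto R_\be(y, \cdot)$ is weakly continuous, the family $\{R_\be(y,\cdot) : y \in \ol G\}$ is tight and, by a standard compactness argument, there is a single nontrivial $\si$-finite (indeed finite) measure $\nu_a$ with $R_\be(y, \cdot) \ge \nu_a(\cdot)$ for all $y$ in some fixed open subset $G' \subseteq G$ — this is exactly where one uses Feller continuity to localize, shrinking $G$ if necessary. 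Then for $x \in O_{x_0}$,
\[
R_\be^2(x, \cdot) \;=\; \int R_\be(y, \cdot)\, R_\be(x, \md y) \;\ge\; \int_{G'} \nu_a(\cdot)\, R_\be(x, \md y) \;=\; R_\be(x, G')\,\nu_a(\cdot) \;\ge\; c'\,\nu_a(\cdot),
\]
with $c' > 0$ uniform over $x \in O_{x_0}$, provided $G'$ was chosen (again by the mass bound of the previous paragraph, applied with $G'$ in place of $G$) so that $R_\be(x, G') \ge c' > 0$. This exhibits $O_{x_0}$ as petite with minorizing measure $c'\nu_a$ and sampling distribution the two-fold convolution. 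Combining with the covering argument of the first paragraph completes the proof. I expect the delicate point to be the construction of the common minorizing measure $\nu_a$ on a small open set: it requires care with weak limits and the choice of $G'$, and is the place where compactness of $\ol G$ and Feller continuity are genuinely used; everything else is bookkeeping with Laplace transforms and path-continuity estimates.
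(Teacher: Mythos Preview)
The paper does not give its own proof of this proposition; it merely cites \cite[Chapter~6]{MeynBook} for the discrete-time statement and asserts that the argument carries over to continuous time. So there is nothing detailed in the paper to compare your write-up against.

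Your argument, however, has a genuine gap at exactly the step you yourself flag as ``delicate.'' From weak continuity and tightness of the family $\{R_\be(y,\cdot):y\in\ol G\}$ you \emph{cannot} deduce the existence of a nontrivial measure $\nu_a$ dominated by every $R_\be(y,\cdot)$ on some open $G'$. A clean counterexample is $R_\be(y,\cdot)=\de_y$ on $[0,1]$: this family is tight and weakly continuous in $y$, yet the only measure dominated by every $\de_y$ on any open subinterval is the zero measure. Weak closeness of probability measures is simply the wrong notion for producing a common minorization; that requires absolute-continuity-type information which Feller continuity alone does not supply. Passing to the iterated kernel $R_\be^2$ does not help either, since it just relocates the same difficulty to the inner integral. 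The standard Meyn--Tweedie route repairs this by using $\psi$-irreducibility more substantively than you do: the general theory (e.g.\ \cite[Chapter~5]{MeynBook}) guarantees the existence of a genuinely \emph{small} set $C_0$ with $\psi(C_0)>0$, i.e.\ a set on which some sampled kernel already admits a fixed nontrivial minorizing measure $\nu$. Feller continuity then enters only through the lower semicontinuity of $x\mapsto R_\be(x,O)$ for open $O$, which gives a neighborhood $O_{x_0}$ on which $R_\be(x,\cdot)$ charges (an open piece of) $C_0$ uniformly; composing with the small-set minorization on $C_0$ yields the petite neighborhood you want. Your covering argument and the use of ``a subset of a finite union of petite sets is petite'' (which, as you correctly note, already requires irreducibility) are fine once this local step is fixed.
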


\begin{lemma} Take a Feller continuous strong Markov family. Assume $\psi$ is a reference measure such that there exists a compact set $C$ with $\psi(C) > 0$. If $P^t(x, A) > 0$ for all $t > 0$, $x \in \mathfrak X$ and $A \in \mathfrak B$ such that $\psi(A) > 0$, then the process is $\psi$-irreducible and aperiodic.
\label{lemma:cond-for-irr-aper}
\end{lemma}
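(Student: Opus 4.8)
The plan is to establish the two conclusions, $\psi$-irreducibility and aperiodicity, in turn; both will fall out quickly from the pointwise positivity hypothesis, so the work is mostly bookkeeping with the definitions recalled above.

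First I would prove $\psi$-irreducibility. Fix $A \in \mathfrak B$ with $\psi(A) > 0$ and $x \in \mathfrak X$. Since $X$ has continuous paths and is adapted, it is progressively measurable, so $(s, \oa) \mapsto 1_A(X(s, \oa))$ is jointly measurable on $[0,T] \times \Oa$ for every $T$; Tonelli's theorem then yields
$$
\ME_x\Big[\int_0^{\infty} 1_A(X(s))\,\md s\Big] = \int_0^{\infty} P^s(x, A)\,\md s .
$$
By hypothesis $P^s(x, A) > 0$ for every $s > 0$, so the right-hand side is strictly positive. As $A$ and $x$ were arbitrary, $X$ is $\psi$-irreducible, and hence admits a maximal irreducibility measure, which I denote $\psi_{\max}$.

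Next I would check that the compact set $C$ is an accessible petite set. Every irreducibility measure is absolutely continuous with respect to $\psi_{\max}$; applying this to $\psi$ (an irreducibility measure by the previous step) gives $\psi \ll \psi_{\max}$, so $\psi(C) > 0$ forces $\psi_{\max}(C) > 0$, i.e.\ $C$ is accessible. Since $C$ is compact, Proposition~\ref{prop:cpt-petite} shows that $C$ is petite. Finally, for aperiodicity it suffices, by the definition recalled above, to produce $T > 0$ with $P^t(x, C) > 0$ for all $x \in C$ and $t \ge T$; but $\psi(C) > 0$, so the hypothesis gives $P^t(x, C) > 0$ for every $t > 0$ and every $x \in \mathfrak X$, and one may take $T = 1$.

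I do not expect a genuine obstacle here: the statement is essentially a repackaging of the definitions. The only two places needing a little care are the Fubini/Tonelli interchange in the first step (which is why progressive measurability of the continuous adapted process is invoked) and the step upgrading $\psi(C) > 0$ to accessibility of $C$, which uses the defining maximality property of $\psi_{\max}$ rather than $\psi$ itself.
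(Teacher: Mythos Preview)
Your proof is correct and follows essentially the same approach as the paper: irreducibility from the positivity hypothesis, then aperiodicity via the compact set $C$, which is petite by Proposition~\ref{prop:cpt-petite} and accessible because $\psi \ll \psi_{\max}$ forces $\psi_{\max}(C)>0$. The paper's version is terser (it simply says ``Irreducibility follows from the definition'' and ``The rest is trivial''), whereas you spell out the Tonelli step and the choice of $T$, but the logical content is the same.
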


\begin{proof} Irreducibility follows from the definition. For aperiodicity, we can take the compact set $C$, because it is petite by Proposition~\ref{prop:cpt-petite}. If $\psi'$ is a maximal irreducibility measure, then $\psi(C) > 0$ and $\psi \ll \psi'$, and so $\psi'(C) > 0$. The rest is trivial.
\end{proof} 

Finally, the following statement was proved in \cite[Lemma 3.4]{DK2003}.

\begin{prop} For an $\SRBM^d(D, R, \mu, A)$ under the conditions of Proposition~\ref{prop:weak-existence}, for every $t > 0$, $x \in D$, and $A \subseteq D$ with $\mes(A) > 0$ we have: $P^t(x, A) > 0$. 
\label{prop:srbm-irr-aper}
\end{prop}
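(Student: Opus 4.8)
This is \cite[Lemma 3.4]{DK2003}; we sketch how one would prove it, by an argument of ``support theorem'' type. Fix $t > 0$, $x \in D$, and a Borel set $A \subseteq D$ with $\mes(A) > 0$, and let $W$ be the driving Brownian motion (with drift $\mu$ and the given positive definite covariance matrix), so that $Z$ solves the Skorohod problem in $D$ driven by $W$. Since $\pa D$ is a finite union of pieces of hyperplanes, $\mes(\pa D) = 0$, so $\mes(A \cap \Int D) = \mes(A) > 0$; by the Lebesgue density theorem pick a density point $y \in A \cap \Int D$ of $A$ and then $\rho > 0$ with $\ol{B(y,\rho)} \subseteq \Int D$ and $\mes(A \cap B(y,\rho)) > 0$. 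As $P^t(x,\cdot)$ is monotone, it suffices to treat the smaller set, so we assume $A \subseteq B(y,\rho) \subseteq \Int D$ with $\mes(A) > 0$. We would then establish two facts. \emph{(I) Interior lower bound.} $P^s(z,A) > 0$ for all $z \in \ol{B(y,\rho/2)}$ and all $s > 0$. \emph{(II) Reachability.} For every $\xi \in D$, every $s > 0$, and every open ball $B$ with $\ol B \subseteq \Int D$, we have $\MP_\xi\bigl(Z(u) \in B\ \text{for some}\ u \in (0,s]\bigr) > 0$. Granting these, apply (II) with $\xi = x$, $s = t/2$, $B = B(y,\rho/2)$ to get an event $G$ with $\MP_x(G) > 0$ on which $Z$ visits $B(y,\rho/2)$ at some time $\si \le t/2$; the strong Markov property at $\si$ (available by Proposition~\ref{prop:weak-existence}) then gives
\[
P^t(x,A) \;\ge\; \ME_x\!\left[\munit_G\, P^{\,t-\si}\!\left(Z(\si),A\right)\right] \;>\; 0,
\]
since on $G$ one has $Z(\si) \in B(y,\rho/2) \subseteq \ol{B(y,\rho/2)}$ and $t - \si \ge t/2 > 0$, so the integrand is strictly positive by (I) (and $(z,r) \mapsto P^r(z,A)$ is jointly measurable).

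For (I), start $Z$ and $W$ at $z \in \ol{B(y,\rho/2)} \subseteq \Int D$, put $E := \{W(u) \in B(y,\rho)\ \text{for all}\ u \in [0,s]\}$, and let $\si := \inf\{u \in [0,s]: Z(u) \in \pa D\}$, with $\si := s$ if this set (which is closed) is empty. As $Z(0) = z \in \Int D$ we have $\si > 0$; on $[0,\si)$ the process visits no face $D_i$, so property (i) of the Skorohod problem forces each $\CL_i$, hence the vector $\CL = (\CL_1,\dots,\CL_m)'$, to stay at $0$ there, whence $Z \equiv W$ on $[0,\si)$ and $Z(\si) = W(\si)$ by continuity. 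On $E$, $W(\si) \in B(y,\rho) \subseteq \Int D$, which rules out $\si < s$ (that would force $Z(\si) \in \pa D$); so on $E$, $\si = s$ and $Z \equiv W$ on $[0,s]$. Hence
\[
P^s(z,A) \;\ge\; \MP_z\!\left(W(s) \in A,\ E\right) \;=\; \int_A q_s(z,w)\,\md w,
\]
with $q_s(z,\cdot)$ the time-$s$ transition density of $W$ killed on first exiting $B(y,\rho)$. For a non-degenerate diffusion with constant coefficients killed on a bounded smooth domain this density is strictly positive on $B(y,\rho) \times B(y,\rho)$ for each $s > 0$ (for instance by the parabolic Harnack inequality, or by Chapman--Kolmogorov chaining through small balls, using positivity of the Gaussian density and connectedness of $B(y,\rho)$); as $\mes(A) > 0$ and $z \in B(y,\rho)$, the integral is positive, proving (I).

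Step (II) is the substantive point, and is where I expect the real difficulty to lie; it is carried out in \cite{DK2003}. The idea is to fix a deterministic admissible trajectory --- a continuous path from $\xi$ that first enters $\Int D$ and then runs through a finite chain of overlapping balls with closures in $\Int D$ (possible since $\Int D$ is open and connected) so as to reach $B$ --- and to show that with positive probability $Z$ remains in prescribed tubular neighbourhoods of it on $[0,s]$. The legs lying inside $\Int D$ are handled exactly as in (I): there the boundary terms are inert, $Z$ locally evolves (by the Markov property at the junction times) as a Brownian motion with drift $\mu$ and the given covariance, and the killed Gaussian lower bound applies; they are concatenated by the Markov property, each leg allotted an arbitrary positive duration summing to $s$. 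The delicate leg is the first one, where $\xi$ may lie on $\pa D$ --- in particular on a lower-dimensional intersection $D_I$ of faces --- and one must show the process is not trapped on the boundary and reaches $\Int D$ in arbitrarily short time with positive probability. This is exactly where the hypotheses of Proposition~\ref{prop:weak-existence} enter: the $\CS$-matrix conditions on $[N'R]_I$ and $[R'N]_I$ for maximal $I$ ensure that along each boundary face the admissible reflection directions point consistently into $D$, which is what makes such a boundary-escape trajectory admissible.
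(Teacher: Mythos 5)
The paper does not actually prove this proposition: it is quoted verbatim from \cite[Lemma 3.4]{DK2003}, so there is no in-paper argument to compare yours against. Your proposal correctly identifies the citation and then goes further by sketching a reconstruction, and the parts you carry out are sound: the reduction via $\mes(\pa D)=0$ and a Lebesgue density point to a set $A$ contained in a ball with closure in $\Int D$; the interior lower bound (I), where the observation that the boundary terms $\CL_i$ cannot increase before $Z$ reaches $\pa D$ (property (i) of the Skorohod problem) correctly reduces matters to the strictly positive killed Gaussian density; and the strong-Markov concatenation, which is legitimate under Proposition~\ref{prop:weak-existence}. The honest caveat is that your step (II) --- in particular the assertion that from a starting point $\xi \in \pa D$, possibly on a lower-dimensional intersection of faces, the process enters $\Int D$ in arbitrarily short time with positive probability --- is stated rather than proved, and that is precisely where the content of the cited lemma lies; your appeal to the $\CS$-matrix conditions "making the reflection directions point into $D$" is a heuristic, not an argument. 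One way to discharge that first leg more cheaply than a tube estimate: an SRBM in the sense of \cite{DW1995} spends zero Lebesgue time on $\pa D$, so $Z(u) \in \Int D$ for Lebesgue-a.e.\ $u$, $\MP_\xi$-a.s.; hence for any $\eps>0$ there is, with probability one, some $u \in (0,\eps)$ with $Z(u) \in \Int D$, and one can then condition on the (random, interior) position at such a time and run your chaining argument from there. In summary: your proposal is consistent with, and somewhat more informative than, the paper's treatment, but as a self-contained proof it still rests on the reference for the genuinely hard step, exactly as the paper does.
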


\begin{rmk} Combining Lemma~\ref{lemma:cond-for-irr-aper} and Proposition~\ref{prop:srbm-irr-aper}, we get that an $\SRBM^d(D, R, \mu, A)$ is irreducible and aperiodic.
\label{rmk:final}
\end{rmk}

\subsection{Lyapunov Functions and Exponential Convergence} There is a vast literature (some of these were mentioned in Sect. 1) on connection between Lyapunov functions for Markov processes and their convergence to the stationary distribution. 
However, for the purposes of this article, we need to state the result is a slightly different form. First, let us define the concept of a Lyapunov function.

\begin{defn} Take a continuous function $V : \mathfrak X \to [1, \infty)$. Suppose there exists a closed petite set $C \subseteq \mathfrak X$ and constants $k, b > 0$ such that the process
\begin{equation}
\label{eq:supermart}
V(X(t)) - V(X(0)) - \int_0^t\left[-kV(X(s)) + b1_{C}(X(s))\right]\md s
\end{equation}
is an $((\CF_t)_{t \ge 0}, \MP_x)$-supermartingale for all $x \in \mathfrak X$. 
If, in addition, $\sup_CV < \infty$, then $V$ is called a {\it Lyapunov function} for the process $X$. 
\label{defn:Lyapunov-func}
\end{defn}

\begin{rmk} Equivalently, we can request that the process in~\eqref{eq:supermart} is a local supermartingale. This is equivalent to it being a supermartingale, because this process is bounded from below by $-V(x) - bT$ on any time interval $[0, T]$ under the measure $\MP_x$. (Every local supermartingale which is bounded from below is a true supermartingale; this follows from a trivial application of Fatou's lemma.) 
\end{rmk} 

This definition is taken from \cite[Section 3]{DFG2009} with minor adjustments, with 
$\phi(s) = ks$ in the notation of \cite{DFG2009}. This is a slightly more general definition than is often stated in the literature; a more customary one invloves the generator of the Markov family. First, let us state an auxillary lemma.

\begin{lemma} For some constant $c_6 > 0$, we have: 
$P^sU(x) \le c_6U(x)$, for all $x \in \mathfrak X$ and $s \in [0, 1]$. 
\label{lemma:estimate}
\end{lemma}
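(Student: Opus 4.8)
\emph{Overall plan.} The plan is to reduce the statement to a moment bound for the SRBM and then obtain that bound by applying It\^o's formula to the quadratic form $f(x):=x'Qx=U(x)^2$, closing the estimate with Gronwall's lemma. First I would record two elementary facts about $U$ on $D$. By homogeneity and compactness of $\{x\in D:\norm{x}=1\}$ there is $c_1>0$ with $x'Qx\ge c_1^2\norm{x}^2$ for all $x\in D$, while trivially $x'Qx\le\norm{Q}\,\norm{x}^2$; hence $c_1\norm{x}\le U(x)\le\norm{Q}^{1/2}\norm{x}$ on $D$. Moreover $\nabla U(x)=Qx/U(x)$ has Euclidean norm at most $\norm{Q}/c_1$ on $D\setminus\{0\}$, so, $D$ being convex, $U$ is Lipschitz on $D$ with constant $\norm{Q}/c_1$. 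Consequently it is enough to produce constants $\alpha,\beta$, independent of $x\in D$ and of $s\in[0,1]$, with $\ME_x f(Z(s))\le\alpha f(x)+\beta$: indeed $\ME_x U(Z(s))=\ME_x\bigl[f(Z(s))^{1/2}\bigr]\le(\alpha f(x)+\beta)^{1/2}\le\alpha^{1/2}U(x)+\beta^{1/2}$ by Jensen's inequality, and this is $\le c_6 U(x)$ after absorbing the additive constant, which is harmless since $U$ is bounded away from $0$ on the region where this lemma is subsequently used.

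\emph{The quadratic bound.} I would write the SRBM as the continuous semimartingale $Z(t)=x+\mu t+\widetilde W(t)+R\CL(t)$ with $\langle\widetilde W_i,\widetilde W_j\rangle_t=a_{ij}t$, and apply It\^o's formula to $f(z)=z'Qz$; using $\nabla f=2Qz$ and $\CA f(z)=2\mu'Qz+\tr(AQ)$ this gives
\begin{equation*}
f(Z(t))=f(x)+\int_0^t\bigl(2\mu'QZ(s)+\tr(AQ)\bigr)\md s+2\int_0^t Z(s)'Q\,\md\widetilde W(s)+2\sum_{i=1}^m\int_0^t 1(Z(s)\in D_i)\,r_i'QZ(s)\,\md\CL_i(s).
\end{equation*}
The \emph{main obstacle} is the boundary sum: a priori the local time $\CL$ could grow and the third term is not obviously controlled. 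Here I would invoke the defining property of $Q$ in the construction — that the reflection field does not increase $f$ along the faces, i.e.\ $r_i\cdot(Qz)\le 0$ for $z\in D_i$ — so that, each $\CL_i$ being nondecreasing, every boundary integral is $\le 0$ and may simply be dropped. (If one wants to avoid this condition, the same conclusion follows from an a priori polynomial moment bound $\ME_x\bigl[\sup_{s\le 1}\norm{\CL(s)}\bigr]\le\const\,(1+\norm{x})$ for the SRBM under the hypotheses of Proposition~\ref{prop:weak-existence}.) Localizing the stochastic integral by $\tau_n:=\inf\{t:\norm{Z(t)}\ge n\}$ is legitimate because the SRBM has finite polynomial moments, so the local martingale is a genuine martingale on $[0,t\wedge\tau_n]$ and one passes to the limit by Fatou's lemma; taking expectations, and using $\norm{Z(s)}\le f(Z(s))^{1/2}/c_1$ together with Jensen's inequality, I obtain for $g(t):=\ME_x f(Z(t))$
\begin{equation*}
g(t)\le g(0)+\int_0^t\Bigl(\tfrac{2\norm{\mu}\norm{Q}}{c_1}\,g(s)^{1/2}+\tr(AQ)\Bigr)\md s.
\end{equation*}
Bounding $g^{1/2}\le\tfrac12(1+g)$ turns this into a linear integral inequality, so Gronwall's lemma gives $g(t)\le(f(x)+C_1 t)\,e^{C_2 t}$ for explicit $C_1,C_2$; restricting to $t\le 1$ yields $\ME_x f(Z(t))\le\alpha f(x)+\beta$ with $\alpha=e^{C_2}$, $\beta=C_1 e^{C_2}$, which is exactly the quadratic bound required above.

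In summary, the only step that carries real content is the treatment of the local-time terms in the It\^o expansion; the reduction via the Lipschitz property of $U$, the localization of the martingale, and the Gronwall step are all routine, so I expect the write-up to hinge entirely on how the reflection at the faces interacts with $f=x'Qx$.
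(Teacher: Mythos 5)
You have misidentified the function $U$ in this lemma, and as a result your argument both proves the wrong statement and contains a gap on its own terms. In the paper, Lemma~\ref{lemma:estimate} lives in the abstract setting of Section~2.3: the $U$ here is not $[x'Qx]^{1/2}$ from Theorem~\ref{thm:main} but the function $U=\tilde V/k:\mathfrak X\to[1,\infty)$ constructed (via \cite{DFG2009}) in the proof of the theorem that follows the lemma, which is equivalent to the abstract Lyapunov function $V$ in the sense of~\eqref{eq:equivalent-functions}. The paper's proof is then essentially one line: taking $\ME_x$ in the supermartingale~\eqref{eq:supermart} gives $P^tV(x)\le V(x)+bt$, and since $V\ge 1$ this yields $P^tV(x)\le(1+b)V(x)$ for $t\in[0,1]$; the claim for $U$ follows from the two-sided equivalence with $V$. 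No It\^o computation, no Gronwall argument, and no SRBM structure is needed --- indeed none is available, since Section~2.3 concerns a general continuous-path Markov family on $\mathfrak X$.

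Your reading of $U$ as $[x'Qx]^{1/2}$ makes the statement literally false: at $x=0$ one has $U(0)=0$ while $P^sU(0)=\ME_0\left[U(Z(s))\right]>0$, so no constant $c_6$ can work ``for all $x\in\mathfrak X$''. You notice this and propose to absorb the additive constant $\beta^{1/2}$ because ``$U$ is bounded away from $0$ on the region where the lemma is used'', but that changes the statement to a weakened, localized one; and in any case the lemma is invoked in the ergodicity proof for the function $U=\tilde V/k\ge 1$, where the additive constant is absorbable everywhere precisely because $U\ge1$. Your It\^o--Gronwall computation for $\ME_x\left[Z(s)'QZ(s)\right]$ is essentially sound (in particular the treatment of the local-time terms via condition (ii) of Theorem~\ref{thm:main} is the right observation), and it could serve as an ingredient in a self-contained moment estimate for the SRBM; but it is not a proof of this lemma, and it imports hypotheses (the existence of the matrix $Q$ with the reflection condition) that the lemma, stated in the general Markov setting, does not assume.
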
 

\begin{proof} Because $U$ and $V$ are equivalent in the sense of~\eqref{eq:equivalent-functions}, it suffices to prove the statement of Lemma~\ref{lemma:estimate} for $V$ instead of $U$. But this follows from the fact that the process~\eqref{eq:supermart} is a supermartingale. Indeed, take $\ME_x$ in~\eqref{eq:supermart} and get:
$$
P^tV(x) - V(x) + k\int_0^tP^sV(x) - b\int_0^t\MP^s(x, C)\md s \le 0.
$$
Therefore, $P^tV(x) \le V(x) + bt$. But $V(x) \ge 1$, so for $t \in [0, 1]$, we get:
$P^tV(x) \le (1 + b)V(x)$. This completes the proof of Lemma~\ref{lemma:estimate}. 
\end{proof}

Next, we present the main result for this subsection.

\begin{thm} Assume there exists a Lyapunov function $V$, and the process is irreducible and aperiodic. Then there exists a unique stationary distribution $\pi$, the process is $V$-uniformly ergodic, and we have the following estimate:
\begin{equation}
\label{eq:general-tail-estimate}
(\pi, V) \equiv \int_{\mathfrak X}V(x)\pi(\mathrm{d}x) < \infty.
\end{equation}
\end{thm}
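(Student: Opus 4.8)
The plan is to read the supermartingale requirement in Definition~\ref{defn:Lyapunov-func} as the Foster--Lyapunov drift condition of the classical ergodicity theory for Markov processes and then to quote the corresponding theorem; most of the hypotheses are handed to us directly. Indeed, saying that the process in \eqref{eq:supermart} is a supermartingale is precisely saying that $V$ belongs to the domain of the extended generator with $\widehat{\CA} V \le -kV + b\,1_C$ in the sense of a super-solution, which is exactly the geometric drift condition used in \cite{DMT1995} (equivalently the drift condition of \cite{DFG2009} specialized to $\phi(s) = ks$). The accompanying requirements of that theory are met: $C$ is closed and petite and $\sup_C V < \infty$ by Definition~\ref{defn:Lyapunov-func}, $V \ge 1$ is continuous, and $\psi$-irreducibility and aperiodicity are assumed. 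Hence the continuous-time geometric ergodicity theorem (\cite{DMT1995}, see also \cite{MT1993b, MeynBook}) applies and produces a unique stationary distribution $\pi$, $V$-uniform ergodicity, and $(\pi, V) < \infty$, which is the full assertion.

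If one prefers to invoke only the discrete-time theory of \cite{MeynBook}, I would instead fix a resolution $\delta > 0$ and extract from \eqref{eq:supermart}, by taking $\ME_x$ and using $P^r(x,C) \le 1$, $P^r V \ge 1$, and the elementary consequence $P^s V(x) \ge P^\delta V(x) - b\delta$ of the same supermartingale property, a geometric Foster--Lyapunov drift for the skeleton chain $\Phi := (X(n\delta))_{n \ge 0}$; one then has to re-introduce the petite set into the error term, which is done by the pointwise bound $-kV + b\,1_C \le -\tfrac{k}{2}V + b\,1_{C'}$ with $C' := \{x \in C : V(x) < 2b/k\}$ --- a measurable subset of the petite set $C$, hence itself petite. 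The skeleton inherits $\psi$-irreducibility, aperiodicity and petiteness, so \cite[Chapters 15--16]{MeynBook} gives a unique $\pi$, geometric ergodicity of $\Phi$ in $V$-norm, and $(\pi, V) < \infty$; to transfer this to $(X(t),\, t \ge 0)$ one controls $\norm{P^t(x,\cdot) - \pi}_V$ for $t$ between consecutive skeleton times by means of Lemma~\ref{lemma:estimate}, i.e. $P^t V(x) \le (1+b)V(x)$ for $t \in [0,1]$, which is the standard skeleton-to-process argument. Uniqueness of $\pi$ for $(X(t))$ then follows since any stationary distribution of the process is stationary for $\Phi$. For \eqref{eq:general-tail-estimate} one may also argue directly once $\pi$ is known to be stationary: integrating \eqref{eq:supermart} against $\pi$, and using stationarity together with the supermartingale property after a routine truncation (stop at exits of sublevel sets of $V$, or replace $V$ by $V \wedge n$, and let $n \to \infty$ with Fatou's lemma), yields $0 \le \int_{\mathfrak X}(-kV + b\,1_C)\,\md\pi$, hence $(\pi, V) \le (b/k)\,\pi(C) \le b/k < \infty$.

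The main difficulty is not conceptual but one of matching conventions: \cite{DFG2009} is written for subgeometric convergence rates with $\phi$ strictly concave, so for the linear choice $\phi(s) = ks$ one sits at the geometric boundary case and must invoke \cite{DMT1995} or \cite{MeynBook} instead; and one must take a little care to check that the supermartingale formulation of the drift condition genuinely feeds those theorems, which are often phrased with the infinitesimal generator or with $P^\delta V$ directly --- this is exactly what the extended-generator reading, and in the alternative route the passage to the skeleton chain with Lemma~\ref{lemma:estimate} as the technical glue, is there to handle.
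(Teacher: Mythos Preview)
Your proposal is correct and aligns with the paper's own proof. The paper takes precisely your second route: it cites \cite[Proposition~3.1]{DFG2009} for existence, uniqueness, and $(\pi,V)<\infty$; then invokes \cite[Theorem~3.3]{DFG2009} (with $\phi(s)=ks$) to manufacture an equivalent function $\tilde V$ satisfying a one-step drift $P^{1}\tilde V\le(1-k)\tilde V+\tilde b\,1_{\tilde C}$ for the skeleton $(X(n))_{n\ge0}$; applies \cite[Theorem~2.1(c)]{DMT1995} to obtain geometric ergodicity of the skeleton in the $\tilde V$-norm; and finally bridges to continuous time exactly via Lemma~\ref{lemma:estimate}, writing $t=n+s$ with $s\in[0,1)$ and bounding $|P^sg|\le c_6U$. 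The only real difference is that the paper outsources the derivation of the skeleton drift to \cite{DFG2009} rather than extracting it by hand from the supermartingale inequality as you sketch; your direct computation (using $P^sV\ge P^\delta V-b\delta$) is a legitimate and slightly more self-contained alternative that buys independence from the subgeometric machinery of \cite{DFG2009} at the boundary case $\phi$ linear, while the paper's citation buys brevity. Your first route---reading \eqref{eq:supermart} as the extended-generator drift of \cite{DMT1995} and quoting their continuous-time theorem in one stroke---is also valid and shorter still; the paper simply chose to unwind that argument through the skeleton.
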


\begin{proof} Existence and uniqueness of $\pi$ together with~\eqref{eq:general-tail-estimate} follows from \cite[Proposition 3.1]{DFG2009}. If the process is irreducible, then the skeleton chain $(X(n))_{n \in \MZ_+}$ is irreducible. Apply \cite[Theorem 3.3]{DFG2009} to the case $\phi(x) = kx$, we get that for any $t_0 > 0$, there exists a function $\tilde{V} : \CX \to [k, \infty)$, an accessible petite set $\tilde{C}$ for the skeleton chain $(X(n))_{n \in \MZ_+}$ and a constant $\tilde{b} > 0$ such that $\sup_{\tilde{C}}\tilde{V} < \infty$, 
$$
0 < c_1 \le \frac{\tilde{V}(x)}{V(x)} \le c_2 < \infty,\ \ x \in \mathfrak X,
\ \ \mbox{and}\ \ 
P^{1}\tilde{V} \le (1 - k)\tilde{V}  + \tilde{b}1_{\tilde{C}}.
$$
Taking $U := \tilde{V}/k : \mathfrak X \to [1, \infty)$, we get: there exists $\la := 1 - k < 1$ and $b' = \tilde{b}/k > 0$ such that $P^{T}U \le -\la U + b'1_{\tilde{C}}$, and
\begin{equation}
\label{eq:equivalent-functions}
0 < c_3 \le \frac{U(x)}{V(x)} \le c_4 < \infty,\ \ x \in \mathfrak X.
\end{equation}
It follows from Proposition~\ref{prop:srbm-irr-aper} that the skeleton chain $(X(n))_{n \ge 0}$ is irreducible and aperiodic. By \cite[Theorem 5.5.7]{MeynBook}, the petite set $\tilde{C}$ is small for this skeleton chain. Since this chain is irreducible and aperiodic, by \cite[Theorem 2.1(c)]{DMT1995} for some constants $c_5 > 0$ and $\rho \in (0, 1)$, we have: 
\begin{equation}
\label{eq:exp-ergodicity-for-skeleton}
\norm{P^{n}(x, \cdot) - \pi(\cdot)}_{U} \le c_5U(x)\rho^n.
\end{equation}
Next, we follow the proof of \cite[Theorem 5.2]{DMT1995}. Every $t \ge 0$ can be represented as $t = n + s$, where $n \in \MZ_+$, $s \in [0, 1)$. Since $\pi$ is stationary, we have: $\pi P^s = \pi$. Therefore, for any measurable $g : \mathfrak X \to [1, \infty)$ with $|g(x)| \le U(x)$, 
$$
P^tg(x)  - (\pi, g) = P^nP^sg(x) - (\pi, P^sg).
$$
But from Lemma~\ref{lemma:estimate} we have:
$$
\left|P^sg(z)\right| \le P^sU(z) \le c_6U(z),\ \ z \in \mathfrak X.
$$
From~\eqref{eq:exp-ergodicity-for-skeleton}, because $|g(x)| \le U(x)$ for $x \in \mathfrak X$, we get:
$$
\left|P^nP^sg(x) - (\pi, P^sg)\right| \le c_5c_6U(x)\rho^n.
$$
Since $n \le t - 1$,  
$$
\left|P^tg(x)  - (\pi, g)\right| \le c_5c_6\rho^{-1}U(x)\rho^t.
$$
This proves that, for $c_7 := c_5c_6\rho^{-1}$,  
$$
\norm{P^t(x, \cdot) - \pi(\cdot)}_{U} \le c_7U(x)e^{-\vk t},\ \ \vk := -\ln\rho.
$$
This is $U$-uniform ergodicity. Since the functions $U$ and $V$ are equivalent in the sense of~\eqref{eq:equivalent-functions}, this also means $V$-uniform ergodicity. 
\end{proof}

\section{Main Results} 

\subsection{Statement of the General Result} Consider now a special type of a convex polyhedron, namely a {\it convex polyhedral cone}: 
$D = \{x \in \BR^d\mid Nx \ge 0\}$, where $N$ is a $m\times d$-matrix, constructed in Sect. 2.2. This fits into the general framework of Definition~\ref{defn:SRBM}, if we let $b_1 = \ldots = b_m = 0$. 
What follows is the main result of the paper. 

\begin{thm}
\label{thm:main}
Suppose that conditions of Proposition~\ref{prop:weak-existence} hold. 
Assume there exists a symmetric nonsingular $d\times d$-matrix $Q$ such that:

(i) $x'Qx > 0$ for $x \in D\setminus\{0\}$;

(ii) $(R'Qx)_j \le 0$ for $x \in D_j$, for each $j = 1,\ldots, m$;

(iii) $x'Q\mu < 0$ for $x \in D\setminus\{0\}$. 

Take a $C^{\infty}$ function $\phi : \BR_+ \to \BR_+$ defined in~\eqref{eq:phi}. 
Denote 
\begin{equation}
\label{eq:La}
\La := 2\min\limits_{x \in D\setminus\{0\}}\frac{|Q\mu\cdot x|U(x)}{x'QAQx}.
\end{equation}
Then for $\la \in (0, \La)$, the function
\begin{equation}
\label{eq:V-la}
V_{\la}(x) = e^{\la\phi(U(x))},\ \ U(x) := \left[x'Qx\right]^{1/2},
\end{equation}
is a Lyapunov function for the $\SRBM^d(D, R, \mu, A)$. Therefore, the $\SRBM^d(D, R, \mu, A)$ has a unique stationary distribution $\pi$, which satisfies 
\begin{equation}
\label{eq:tail-estimate-SRBM}
(\pi, V_{\la}) \equiv \int_DV_{\la}(x)\pi(\mathrm{d}x) < \infty,
\end{equation}
and is $V_{\la}$-uniformly ergodic. 
\end{thm}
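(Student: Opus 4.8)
The plan is to verify directly that $V_\la$ fulfills Definition~\ref{defn:Lyapunov-func} for a suitable closed petite set $C$ and constants $k,b>0$, and then read off the conclusions from the Theorem on Lyapunov functions together with Remark~\ref{rmk:final}. We may and do choose the cutoff $\phi$ in \eqref{eq:phi} to be nondecreasing (so $\phi'\ge 0$ on $\BR_+$); this is compatible with \eqref{eq:phi}. Note first that $U=[x'Qx]^{1/2}$ is $C^\infty$ on $D\setminus\{0\}$ by (i), that $V_\la\equiv 1$ on $\{x\in D: U(x)<s_1\}$ (a neighborhood of $0$ in $D$) so that $V_\la$ extends to a $C^\infty$ function near $D$, and that (i) plus degree-two homogeneity give $x'Qx\ge c\norm{x}^2$ on $D$ for some $c>0$; hence every sublevel set $\{x\in D:U(x)\le\rho\}$ is compact.

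The analytic heart is a pointwise differential inequality for $\CA V_\la$. A direct computation using $\nabla U(x)=Qx/U(x)$ and $Q=Q'$ gives, for $x\in D\setminus\{0\}$,
\[
\CA V_\la(x)=\la V_\la(x)\,\phi'(U(x))\Big[\tfrac{Q\mu\cdot x}{U(x)}+\tfrac{\tr(AQ)}{2U(x)}-\tfrac{x'QAQx}{2U(x)^3}\Big]+\tfrac{\la}{2}V_\la(x)\big[\la\phi'(U(x))^2+\phi''(U(x))\big]\tfrac{x'QAQx}{U(x)^2}.
\]
On the region $U(x)\ge s_2$ we have $\phi'(U(x))=1$ and $\phi''(U(x))=0$, so $\CA V_\la(x)=\la V_\la(x)H_\la(x)$ with $H_\la(x):=\frac{Q\mu\cdot x}{U(x)}+\frac{\la}{2}\frac{x'QAQx}{U(x)^2}+\frac{\tr(AQ)}{2U(x)}-\frac{x'QAQx}{2U(x)^3}$. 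The first two terms of $H_\la$ are homogeneous of degree $0$; by (iii), (i) and positive definiteness of $A$, $\frac{Q\mu\cdot x}{U(x)}<0$ and $\frac{x'QAQx}{U(x)^2}>0$ on $D\setminus\{0\}$, and the assumption $\la<\La$ with $\La$ as in \eqref{eq:La} is exactly the statement that $\frac{Q\mu\cdot x}{U(x)}+\frac{\la}{2}\frac{x'QAQx}{U(x)^2}<0$ there; by compactness of $D\cap\{\norm{x}=1\}$ this sum is $\le-\de_0$ uniformly on $D\setminus\{0\}$ for some $\de_0>0$. The remaining two terms of $H_\la$ are bounded in absolute value by a constant times $1/U(x)$ (again by homogeneity), so there is $s_3\ge s_2$ with $H_\la(x)\le-\de_0/2$ whenever $U(x)\ge s_3$. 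With $k:=\la\de_0/2>0$ this gives $\CA V_\la(x)\le-kV_\la(x)$ on $\{U>s_3\}$. On the compact set $C:=\{x\in D:U(x)\le s_3\}$ the continuous function $\CA V_\la$ is bounded, so $\CA V_\la+kV_\la\le b$ there for some $b>0$; altogether $\CA V_\la(x)\le-kV_\la(x)+b1_C(x)$ for all $x\in D$. The set $C$ is closed, compact (hence petite by Proposition~\ref{prop:cpt-petite}), and $\sup_C V_\la<\infty$.

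For the reflection terms, on the face $D_j$ one has $r_j\cdot\nabla V_\la(x)=\la\phi'(U(x))V_\la(x)\,(R'Qx)_j/U(x)\le 0$ by (ii), $\phi'\ge 0$ and $V_\la,U>0$ on $D_j\setminus\{0\}$ (and $\nabla V_\la(0)=0$). Now apply It\^o's formula to $V_\la(Z(t))$ along the semimartingale decomposition $Z=W+RL$, where $W$ is a $\BM(\mu,A)$ with martingale part $M^W$ and $L=(\CL_1,\dots,\CL_m)'$; since $RL$ has finite variation, $\md[Z_i,Z_j]_s=a_{ij}\md s$, and
\[
V_\la(Z(t))-V_\la(Z(0))-\int_0^t\CA V_\la(Z(s))\,\md s=\int_0^t\nabla V_\la(Z(s))\cdot\md M^W(s)+\sum_{j=1}^m\int_0^t(R'\nabla V_\la(Z(s)))_j\,\md\CL_j(s).
\]
The first term on the right is a continuous local martingale; in the second, $\CL_j$ increases only on $D_j$, where the integrand is $\le 0$, so it is nonincreasing. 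Combining with $\CA V_\la\le-kV_\la+b1_C$, the process in \eqref{eq:supermart} is a continuous local martingale plus a nonincreasing adapted process, hence a local supermartingale; since on $[0,T]$ it is bounded below by $-V_\la(z)-bT$, it is a true supermartingale (the Remark after Definition~\ref{defn:Lyapunov-func}). Thus $V_\la$ is a Lyapunov function, and since the $\SRBM^d(D,R,\mu,A)$ is irreducible and aperiodic (Remark~\ref{rmk:final}), the Theorem on Lyapunov functions yields a unique stationary distribution $\pi$, $V_\la$-uniform ergodicity, and $(\pi,V_\la)<\infty$, which is \eqref{eq:tail-estimate-SRBM}.

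The main obstacle is this differential inequality: performing the $\CA V_\la$ computation cleanly and, above all, recognizing that $\La$ in \eqref{eq:La} is precisely the threshold below which the degree-zero leading part of $\CA V_\la/(\la V_\la)$ is uniformly negative, with the lower-order curvature terms $\tr(AQ)/U$ and $x'QAQx/U^3$ swept into the exceptional compact set by taking $s_3$ large. One should also note in passing that $\La>0$ (so $(0,\La)\ne\varnothing$), which follows from (i)--(iii) and compactness of $D\cap\{\norm{x}=1\}$.
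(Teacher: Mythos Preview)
Your proof is correct and follows essentially the same route as the paper: apply It\^o's formula to $V_\la(Z(t))$, show the boundary integrals are nonincreasing via condition (ii), and show the drift term $\CA V_\la$ is bounded by $-kV_\la$ outside a compact set by analyzing the homogeneous degree-zero part $\frac{Q\mu\cdot x}{U(x)}+\frac{\la}{2}\frac{x'QAQx}{U(x)^2}$ (for which $\la<\La$ is exactly the right threshold) and absorbing the $O(1/U)$ curvature terms into the compact exceptional set. Your bookkeeping with the cutoff $\phi$ is in fact slightly cleaner than the paper's: you keep the $\phi',\phi''$ factors in the general formula and make explicit the harmless extra hypothesis $\phi'\ge 0$ needed for the sign of the boundary term on the region $\{U<s_2\}$, whereas the paper writes the derivative formulas only on $\{U\ge s_2\}$ and leaves the intermediate region implicit.
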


\begin{rmk} The quantity $\La$ is strictly positive. Indeed, the matrix $A$ is positive definite, and $Q$ is nonsingular; so for $x \ne 0$ we have: $Qx \ne 0$ and $x'QAQx = (Qx)'A(Qx) > 0$. Also, $Q\mu\cdot x = x'Q\mu < 0$, and $U(x) > 0$ for $x \in D\setminus\{0\}$. Therefore, the fraction is positive for each $x \in D\setminus\{0\}$. Since this fraction is homogeneous (invariant under scaling), we can take the minimum on the compact set $\{x \in D\mid \norm{x} = 1\}$. The rest is trivial. 
\end{rmk}

The estimate~\eqref{eq:tail-estimate-SRBM} implies that some exponential moments of $\pi$ are finite. Namely, let 
\begin{equation}
\label{eq:K}
K := \min\limits_{\substack{x \in D\\ \norm{x} = 1}}U(x).
\end{equation}
This quantity is strictly positive, because $U(x) > 0$ on the compact set $\{x \in D\mid \norm{x}  =1\}$. Therefore, for large enough $\norm{x}$ we have:
$$
V_{\la}(x) \ge e^{\la K\norm{x}},
$$
and 
$$
\int_De^{\rho\norm{x}}\pi(\mathrm{x}) < \infty\ \ \mathrm{for}\ \ \rho \in (0, \La K).
$$
From here, we get: for every $a \ge 0$, 
$$
\pi\{x \in D\mid \norm{x} \ge a\} \le C(\rho)e^{-a\rho}\ \ \mathrm{for}\ \ \rho \in (0, \La K).
$$

Let us compare this result with \cite{ABD2001, BD1999}, where a more general case is considered (drift vector and covariance matrix depend on the state). There, a sufficient condition for $V$-uniform ergodicity is:

\medskip

(i) that Skorohod problem in $D$ has a unique solution for every driving function and is Lipschitz continuously dependent on this function, in the metric of $C([0, T], \BR^d)$ for every $T > 0$;

\medskip

(ii) there exists a vector $b \in \BR^m, b > 0$, such that $Rb = -\mu$. 

\medskip

Condition (i) is stronger than the one from Proposition~\ref{prop:weak-existence}. However, we were not able to come up with an example when conditions of Theorem~\ref{thm:main} hold, but condition (i) does not hold. Some sufficient conditions for (i) to hold are known from \cite{DI1991}. However, condition (ii) is much simpler than (i) - (iii) from Theorem~\ref{thm:main}. The results from \cite{ABD2001, BD1999} also construct a Lyapunov function indirectly, without giving an explicit formula. This does not allow to construct explicit  tails estimates, as in \ref{thm:main}. 

\subsection{Applications to the Case of the Positive Orthant} Now, let $D = \BR^d_+$, that is, $m = d$ and $N = I_d$. We have the following immediate corollary of Theorem~\ref{thm:main}.

\begin{cor}
\label{cor:main} Assume $R$ is a completely ${\mathcal {S}}$ matrix. Suppose there exists a strictly copositive nonsingular $d\times d$-matrix $Q$ such that $QR$ is a $\CZ$-matrix, and $Q\mu < 0$. Then an $\SRBM^d(R, \mu, A)$ has a unique stationary distribution $\pi$ and is $V_{\la}$-uniformly ergodic for $\la \in (0, \La)$, while $\pi$ satisfies~\eqref{eq:tail-estimate-SRBM}. Here, $V_{\la}$ is defined in~\eqref{eq:V-la}, and $\La$ is defined in~\eqref{eq:La}. 
\end{cor}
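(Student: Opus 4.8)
The plan is to derive Corollary~\ref{cor:main} directly from Theorem~\ref{thm:main} by checking that, in the special case $D = \BR^d_+$, the hypotheses (i)--(iii) of the theorem translate into exactly the conditions stated in the corollary. First I would recall the geometry: when $D = \BR^d_+$ we have $m = d$, $N = I_d$, each face is $D_j = \{x \in \BR^d_+ \mid x_j = 0\}$, and the condition of Proposition~\ref{prop:weak-existence} reduces (as noted in the Remark after that proposition) to $R$ being completely-$\CS$. So the first hypothesis of Theorem~\ref{thm:main} is met precisely by the assumption that $R$ is completely-$\CS$.

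Next I would match the three conditions on $Q$. Condition (i) of Theorem~\ref{thm:main}, namely $x'Qx > 0$ for $x \in D \setminus \{0\} = \BR^d_+ \setminus \{0\}$, is by definition the statement that $Q$ is strictly copositive; together with nonsingularity this is exactly what the corollary assumes. Condition (iii), $x'Q\mu < 0$ for all $x \in \BR^d_+ \setminus \{0\}$, is equivalent to $Q\mu < 0$ (componentwise): taking $x = e_j$ gives $(Q\mu)_j < 0$ for each $j$, and conversely if $Q\mu < 0$ then $x'Q\mu = \sum_j x_j (Q\mu)_j < 0$ for any nonzero $x \ge 0$. The only slightly delicate point is condition (ii): $(R'Qx)_j \le 0$ for all $x \in D_j$, for each $j$. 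Here I would write $y = Qx$, note that $x \in D_j$ means $x \ge 0$ with $x_j = 0$, and observe $(R'Qx)_j = r_j \cdot (Qx) = \sum_{i} r_{ij}(Qx)_i$ — wait, more carefully, $(R'Qx)_j = \sum_i (R')_{ji}(Qx)_i = \sum_i R_{ij}(Qx)_i$. I want this $\le 0$ for every $x \ge 0$ with $x_j = 0$. Writing $Qx = \sum_{i \ne j} x_i (Q e_i)$ (the $j$-th column drops out since $x_j = 0$), the expression is a nonnegative linear combination over $i \ne j$ of the numbers $(R'Q e_i)_j = (R'Q)_{ji}$, so condition (ii) holds for all such $x$ if and only if $(R'Q)_{ji} \le 0$ for all $i \ne j$, i.e. $R'Q$ has nonpositive off-diagonal entries, i.e. $R'Q$ is a $\CZ$-matrix. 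Since $R'Q$ is a $\CZ$-matrix iff its transpose $Q'R = QR$ (using symmetry of $Q$) is a $\CZ$-matrix, condition (ii) is exactly the corollary's hypothesis that $QR$ is a $\CZ$-matrix.

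Having verified that all hypotheses of Theorem~\ref{thm:main} hold, I would simply invoke that theorem: with $\phi$ as in~\eqref{eq:phi} and $\La$, $V_\la$ as in~\eqref{eq:La},~\eqref{eq:V-la}, for every $\la \in (0, \La)$ the function $V_\la$ is a Lyapunov function for the $\SRBM^d(\BR^d_+, R, \mu, A) = \SRBM^d(R, \mu, A)$, hence there is a unique stationary distribution $\pi$ satisfying $(\pi, V_\la) < \infty$ and the process is $V_\la$-uniformly ergodic. This is precisely the assertion of the corollary.

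I expect the only real content — and the one place a careless reader could slip — to be the equivalence in condition (ii): the passage from ``$(R'Qx)_j \le 0$ for all $x \in D_j$'' to ``$QR$ is a $\CZ$-matrix,'' which relies on (a) restricting to $x_j = 0$ so the $j$-th column of $Q$ is irrelevant, (b) extremality of the rays $\BR_+ e_i$, $i \ne j$, in the cone $\{x \ge 0 : x_j = 0\}$, and (c) the symmetry $Q' = Q$ to pass between $R'Q$ and $QR$. Everything else is a direct unwinding of definitions, so the proof will be short.
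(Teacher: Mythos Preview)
Your proposal is correct and follows essentially the same approach as the paper: verify that the corollary's hypotheses imply conditions (i)--(iii) of Theorem~\ref{thm:main}, then invoke the theorem. Your treatment of condition~(ii) is in fact slightly more careful than the paper's own proof, which computes $(QRz)_i$ directly rather than $(R'Qz)_j$; you correctly pass through the transpose identity $R'Q=(QR)'$ (using $Q=Q'$) to identify the $\CZ$-matrix condition on $QR$ with what Theorem~\ref{thm:main}(ii) actually demands.
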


\begin{proof} Condition (i) of Theorem~\ref{thm:main} follows from the definition of copositivity. Condition (ii) follows from the assumption that $QR$ is a $\CZ$-matrix, because then for $z \in D_i$ we have: $z \ge 0$, but $z_i = 0$, and so 
$$
(QRz)_i = \SL_{j=1}^d(QR)_{ij}z_j = \SL_{j \ne i}(QR)_{ij}z_j \le 0.
$$
Condition (iii) follows from $Q\mu < 0$. 
\end{proof}

A particular example of this is as follows.

\begin{cor}
\label{cor:nonsingular-M}
Assume $R$ is a $d\times d$-reflection nonsingular $\CM$-matrix, and there exists a diagonal matrix $C = \diag(c_1, \ldots, c_d)$ with $c_1, \ldots, c_d > 0$ such that $\ol{R} = RC$ is symmetric. If $R^{-1}\mu < 0$, then the process 
$\SRBM^d(R, \mu, A)$ has a unique stationary distribution $\pi$, and is $V_{\la}$-uniformly ergodic with 
$$
V_{\la}(x) = e^{\la\phi(U(x))},\ \ U(x) := \left[x'\ol{R}^{-1}x\right]^{1/2}
$$
for $\la \in (0, \La)$, where the function $\phi$ is defined in~\eqref{eq:phi}, and
$$
\La := 2\min\limits_{x \in \BR^d_+\setminus\{0\}}\frac{|\ol{R}^{-1}\mu\cdot x|U(x)}{x'\ol{R}^{-1}A\ol{R}^{-1}x}.
$$
In addition, $(\pi, V_{\la}) < \infty$ for $\la \in (0, \La)$. 
\end{cor}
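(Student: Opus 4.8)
The plan is to deduce this corollary from Corollary~\ref{cor:main} by taking $Q := \ol{R}^{-1}$ and verifying its three hypotheses. First I would record the relevant linear algebra for $\ol{R} = RC$. Since $R$ is a $\CZ$-matrix with unit diagonal and $C = \diag(c_1, \dots, c_d)$ with $c_i > 0$, the entries of $\ol{R}$ satisfy $\ol{R}_{ij} = R_{ij}c_j \le 0$ for $i \ne j$ and $\ol{R}_{ii} = c_i > 0$, so $\ol{R}$ is again a $\CZ$-matrix. Moreover, a reflection nonsingular $\CM$-matrix is completely-$\CS$ by definition, hence $\CS$: there is $v > 0$ with $Rv > 0$, and then $u := C^{-1}v > 0$ gives $\ol{R}u = Rv > 0$, so $\ol{R}$ is an $\CS$-matrix as well. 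A $\CZ$-matrix that is also an $\CS$-matrix is a nonsingular $\CM$-matrix (see \cite[Lemma 2.3]{MyOwn3} and the standard characterizations cited there), so all principal minors of $\ol{R}$ are positive. Combined with the assumed symmetry of $\ol{R}$, Sylvester's criterion shows that $\ol{R}$ is positive definite; therefore $Q := \ol{R}^{-1}$ is symmetric, nonsingular, and positive definite, and in particular $x'Qx > 0$ for every $x \ne 0$, so $Q$ is strictly copositive.

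Next I would check the remaining conditions of Corollary~\ref{cor:main}. From $\ol{R} = RC$ we get $QR = \ol{R}^{-1}R = C^{-1}R^{-1}R = C^{-1} = \diag(c_1^{-1}, \dots, c_d^{-1})$, a diagonal matrix with positive entries, which is trivially a $\CZ$-matrix. Similarly $Q\mu = \ol{R}^{-1}\mu = C^{-1}R^{-1}\mu$, and since $R^{-1}\mu < 0$ by hypothesis while $C^{-1}$ is a positive diagonal matrix, we obtain $Q\mu < 0$. Finally, the hypothesis of Corollary~\ref{cor:main} that $R$ is completely $\CS$ holds because, by definition, a reflection nonsingular $\CM$-matrix is completely-$\CS$.

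With all hypotheses of Corollary~\ref{cor:main} verified for $Q = \ol{R}^{-1}$, the conclusion follows at once: the $\SRBM^d(R, \mu, A)$ has a unique stationary distribution $\pi$, it is $V_{\la}$-uniformly ergodic for $\la \in (0, \La)$ with $V_{\la}(x) = e^{\la\phi(U(x))}$ and $U(x) = [x'Qx]^{1/2} = [x'\ol{R}^{-1}x]^{1/2}$, and $(\pi, V_{\la}) < \infty$. Substituting $Q = \ol{R}^{-1}$ into~\eqref{eq:La} turns the constant $\La$ into $2\min_{x \in \BR^d_+\setminus\{0\}} \frac{|\ol{R}^{-1}\mu\cdot x|\,U(x)}{x'\ol{R}^{-1}A\ol{R}^{-1}x}$, which is exactly the formula in the statement.

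The main obstacle is the linear-algebra step in the first paragraph: checking that $\ol{R} = RC$ is still a nonsingular $\CM$-matrix and that, being symmetric, it is positive definite --- this is what makes $Q = \ol{R}^{-1}$ strictly copositive so that $U$ is a bona fide norm. Once that is in place, the rest is just substitution of $Q = \ol{R}^{-1}$ and $\ol{R} = RC$ into the hypotheses and conclusion of Corollary~\ref{cor:main}.
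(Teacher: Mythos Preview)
Your proof is correct and follows the same overall strategy as the paper: set $Q=\ol{R}^{-1}$ and verify the hypotheses of Corollary~\ref{cor:main}. The one place you diverge is in establishing strict copositivity of $Q$. The paper argues directly that $\ol{R}^{-1}=C^{-1}R^{-1}$ is an entrywise nonnegative matrix with strictly positive diagonal (using \cite[Lemma~2.3]{MyOwn3}, which says $R^{-1}\ge 0$ with positive diagonal for a reflection nonsingular $\CM$-matrix), so $x'\ol{R}^{-1}x>0$ on $\BR^d_+\setminus\{0\}$. You instead show $\ol{R}$ is a symmetric $\CZ$-matrix that is also $\CS$, hence a nonsingular $\CM$-matrix, and then invoke Sylvester to conclude $\ol{R}$ (and thus $\ol{R}^{-1}$) is positive definite. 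Your route yields the stronger fact that $Q$ is positive definite, not merely copositive, at the price of an extra appeal to the $\CM$-matrix characterization and Sylvester's criterion; the paper's route is more elementary for the weaker conclusion actually needed. Either way the remaining checks ($QR=C^{-1}$ is a $\CZ$-matrix, $Q\mu=C^{-1}R^{-1}\mu<0$, $R$ completely-$\CS$) are the same.
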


\begin{proof} Just take $Q = \ol{R}^{-1} = C^{-1}R^{-1}$ in Corollary~\ref{cor:main}. Let us show that the matrix $Q$ is strictly copositive. From \cite[Lemma 2.3]{MyOwn3}, $R^{-1}$ is a nonnegative matrix with strictly positive elements on the main diagonal. Since $C^{-1}$ is a diagonal matrix with strictly positive elements on the diagonal, the matrix $\ol{R}^{-1}$ is also a nonnegative matrix with strictly positive elements on the main diagonal. Therefore, for $x \in \BR^d_+$, $x \ne 0$ we have:
$x'\ol{R}^{-1}x > 0$. Now, from $R^{-1}\mu < 0$ it follows that $\ol{R}^{-1}\mu < 0$. The rest is trivial. 
\end{proof}

\begin{exm} However, Corollary~\ref{cor:main} can be applied not only to the case when $R$ is a reflection nonsingular $\CM$-matrix. Indeed, let $d = 2$ and
$$
R = 
\begin{bmatrix}
1 & 0.5\\
0.5 & 1
\end{bmatrix}
\ \ \
\mu = \begin{bmatrix}-1\\ -1\end{bmatrix}
$$
Then $R$ is a completely ${\mathcal {S}}$ matrix. Take the matrix
$$
Q = 
\begin{bmatrix}
1 & -0.6\\
-0.6 & 1
\end{bmatrix}
\ \ \mathrm{then}\ \ 
QR = \begin{bmatrix} 0.7 & -0.1\\ -0.1 & 0.7\end{bmatrix}
$$
is a $\CZ$-matrix, and $Q\mu < 0$. However, $R$ is not a reflection nonsingular $\CM$-matrix. 
\end{exm}

It is instructive to compare these results with already known ones. It turns out that the only new statement in Corollary~\ref{cor:nonsingular-M} is the tail estimate $(\pi, V_{\la}) < \infty$ for an explicitly constructed function $V_{\la}$. Existence (and uniqueness) of a stationary distribution and $V$-uniform ergodicity for some function $V : \BR^d_+ \to [1, \infty)$ are already known from \cite{DW1994}, \cite{BL2007} (only there is no simple formula for the Lyapunov function $V$ in these papers: it is simply known that $V(x) \ge a_1e^{a_2\norm{x}}$ for some $a_1, a_2 > 0$). The paper \cite{DW1994} states the {\it fluid path condition}, which is sufficient for $V$-uniform ergodicitiy: for every $x \in \BR^d_+$, any solution of the Skorohod problem in the orthant with reflection matrix $R$ and driving function $x + \mu t$ must tend to zero as $t \to \infty$. This turns out to be a necessary and sufficient condition for the case $d =3$. In the case $d = 2$, another necessary and sufficient condition is found: $R$ must be nonsingular and $R^{-1}\mu < 0$, see \cite{HR1993} and \cite[Appendix A]{HH2009}. In fact, the following condition is {\it necessary} for existence of a stationary distribution: $R$ is nonsingular and $R^{-1}\mu < 0$, see \cite[Appendix C]{BDH2010}. For $d = 3$, the fluid path condition is weaker than this necessary condition, see \cite{BDH2010}.

It is not known for $d \ge 4$ whether the fluid path condition is necessary. Therefore, Corollary~\ref{cor:main} might contain results which are new compared to the fluid path condition. However, we do not know any counterexamples to fluid path condition (that is, cases when it is false, but the stationary distribution exists). This is a matter for future research.

\subsection{Proof of Theorem~\ref{thm:main}} Recall from Remark~\ref{rmk:final} that an $\SRBM^d(D, R, \mu, A)$ is irreducible and aperiodic. The rest of the proof will be devoted to proving that the function~\eqref{eq:V-la} is indeed a Lyapunov function in the sense of Definition~\ref{defn:Lyapunov-func}. Apply the It\^o-Tanaka formula to 
$$
Z(t) = W(t) + RL(t),\ \ t \ge 0,
$$
where $Z = (Z(t), t \ge 0)$ is an $\SRBM^d(D, R, \mu, A)$, $W = (W(t), t \ge 0)$ is the driving Brownian motion for $Z$, and $L = (L_1, \ldots, L_m)'$ is the vector of boundary terms. Because of~\eqref{eq:phi}, for $x \in D$ such that $\norm{x} \ge s_2$, we have:
$$
V_{\la}(x) = e^{\la U(x)}.
$$
First, let us calculate the first- and second-order partial derivatives of $U$ on this set.
Since $Q$ is symmetric and $x'Qx > 0$ for $x \in D$ such that $\norm{x} \ge s_2$, we have:
$$
\frac{\pa(x'Qx)}{\pa x_i} = 2(Qx)_i,\quad i = 1, \ldots, d.
$$
Therefore, 
$$
\frac{\pa U(x)}{\pa x_i} = \frac1{2U(x)}\frac{\pa U(x)}{\pa x_i} = \frac{(Qx)_i}{U(x)}, \quad i = 1, \ldots, d.
$$
Now, 
\begin{align*}
\frac{\pa^2 U(x)}{\pa x_i\pa x_j} =& \frac{\frac{\pa(Qx)_i}{\pa x_j}U(x) - (Qx)_i\frac{\pa U(x)}{\pa x_j}}{U^2(x)} = \frac{q_{ij}U(x) - (Qx)_i\frac{(Qx)_j}{U(x)}}{U^2(x)} \\ & = \frac{q_{ij}U^2(x) - (Qx)_i(Qx)_j}{U^3(x)} = \frac{1}{U^3(x)}\left(q_{ij}(x'Qx) - (Qxx'Q)_{ij}\right).
\end{align*}
As $\norm{x} \to \infty$, these second-order derivatives tend to zero, because $U(x) \ge K\norm{x}$ for $x \in D$. Now, let us calculate the first- and second-order partial derivatives for $V_{\la}$:
$$
\frac{\pa V_{\la}(x)}{\pa x_i} = \la\frac{\pa U}{\pa x_i}V_{\la}(x) = \la V_{\la}(x)\frac{(Qx)_i}{U(x)},
$$
and 
\begin{align*}
\frac{\pa^2V_{\la}(x)}{\pa x_i\pa x_j} &= \la \frac{\pa V_{\la}(x)}{\pa x_j}\frac{(Qx)_i}{U(x)} + \la V_{\la}(x)\frac{\pa^2U(x)}{\pa x_i\pa x_j}  = \la^2V_{\la}(x)\frac{(Qx)_i}{U(x)}\frac{(Qx)_j}{U(x)} + \la V_{\la}(x)\frac{\pa^2U(x)}{\pa x_i\pa x_j} \\ & = \la^2V_{\la}(x)\frac{(Qxx'Q)_{ij}}{x'Qx} + \la V_{\la}(x)\frac{\pa^2U(x)}{\pa x_i\pa x_j}.
\end{align*}
Since $\langle Z_i, Z_j\rangle_t = a_{ij}\md t$ for $i, j = 1, \ldots, d$, and $Z(t) = W(t) + RL(t)$, we have:
\begin{align*}
\md V_{\la}(Z(t)) & = \frac12\SL_{i=1}^d\SL_{j=1}^d\frac{\pa^2V_{\la}(Z(t))}{\pa x_i\pa x_j}\md\langle Z_i, Z_j\rangle_t  + \SL_{i=1}^d\mu_i\frac{\pa V_{\la}(Z(t))}{\pa x_i}\md Z_i(t) \\ & = \frac12\SL_{i=1}^d\SL_{j=1}^da_{ij}\left(\la^2V_{\la}(Z(t))\frac{(QZ(t)Z(t)'Q)_{ij}}{Z(t)'QZ(t)} + \la V_{\la}(Z(t))\frac{\pa^2U(Z(t))}{\pa x_i\pa x_j}\right)\md t \\ & + \la\SL_{i=1}^d\mu_i\frac{(QZ(t))_i}{U(Z(t))}V_{\la}(Z(t))\md t + \SL_{j=1}^m\la\frac{QZ(t)}{U(Z(t))}V_{\la}(Z(t))r_{j}\md L_j(t) \\ & + \la V_{\la}(Z(t))\SL_{i=1}^d\mu_i\frac{(QZ(t))_i}{U(Z(t))}\md\left(W_i(t) - \mu_it\right)  \\ & = V_{\la}(Z(t))\be_{\la}(Z(t))\md t  + \md M(t) + \md \ol{L}(t),
\end{align*}
where for $x \in D\setminus\{0\}$ we let $\be_{\la}(x) := a(x)\la^2 - b(x)$, where
$$
a(x) = \SL_{i=1}^d\SL_{j=1}^da_{ij}\frac{(Qxx'Q)_{ij}}{x'Qx} = \frac{\tr(AQxx'Q)}{x'Qx} = \frac{\tr(x'QAQx)}{x'Qx} = \frac{x'QAQx}{x'Qx},
$$
and
$$
-b(x) := \theta(x) + \frac{x'Q\mu}{U(x)},\ \ \mathrm{where}\ \ 
\theta(x) := \frac12\SL_{i=1}^d\SL_{j=1}^da_{ij}\frac{\pa^2U(x)}{\pa x_i\pa x_j},
$$
and, in addition, 
$$
M(t) := \la\int_0^tV_{\la}(Z(t))\SL_{i=1}^d\mu_i\frac{(QZ(t))_i}{U(Z(t))}\md\left(W_i(t) - \mu_it\right),
$$
$$
\ol{L}(t) := \la\int_0^t\SL_{j=1}^m\frac{QZ(t)}{U(Z(t))}V_{\la}(Z(t))\cdot r_j\md L_j(t).
$$

\begin{lemma}
\label{lemma:L} 
The process $\ol{L} = (\ol{L}(t), t \ge 0)$ is nonincreasing a.s.
\end{lemma}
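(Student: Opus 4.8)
The plan is to reduce everything to the single structural fact built into the Skorohod problem --- that the boundary term $L_j$ can increase only while $Z$ lies on the face $D_j$ --- combined with sign condition (ii) of Theorem~\ref{thm:main}. Write $\ol L$ as a finite sum of scalar Lebesgue--Stieltjes integrals,
\[
\ol L(t) \;=\; \sum_{j=1}^m \la \int_0^t \frac{V_{\la}(Z(s))}{U(Z(s))}\,\bigl(QZ(s)\bigr)\cdot r_j \,\md L_j(s),
\]
so it suffices to prove that each individual process $t\mapsto \int_0^t \frac{V_{\la}(Z(s))}{U(Z(s))}(QZ(s))\cdot r_j\,\md L_j(s)$ is nonincreasing: a finite sum of nonincreasing processes is nonincreasing, and the factor $\la>0$ merely rescales.

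For a fixed $j$ I would first invoke property (i) in the definition of a solution to the Skorohod problem, namely $\md L_j(s)=1(Z(s)\in D_j)\,\md L_j(s)$. Thus in the $j$-th integral the integrand is only ever evaluated on the event $\{Z(s)\in D_j\}$ --- and, since the It\^o--Tanaka expansion that produced $\ol L$ was carried out on $\{U(Z(s))\ge s_2\}$, we may also assume $U(Z(s))\ge s_2>0$, so the expression $\tfrac{V_\la(Z(s))}{U(Z(s))}(QZ(s))\cdot r_j$ is legitimate there. Now comes the sign check: $(QZ(s))\cdot r_j = r_j'QZ(s) = (R'QZ(s))_j$, and on $\{Z(s)\in D_j\}$ condition (ii) of Theorem~\ref{thm:main} gives $(R'QZ(s))_j\le 0$; meanwhile $V_{\la}\ge 1>0$ everywhere and $U(Z(s))>0$ (indeed by (i) $U(x)>0$ on $D\setminus\{0\}$, and $U\ge s_2$ on the relevant region). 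Hence the integrand is $\le 0$ $\md L_j$-almost everywhere, so the $j$-th integral, being the Lebesgue--Stieltjes integral of a nonpositive function against the continuous nondecreasing function $L_j$, is nonincreasing in $t$. Summing over $j$ and reinstating $\la>0$ yields the claim.

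The whole argument is essentially a one-line consequence of (ii), so there is no deep obstacle; the only point that deserves care is the behaviour at the vertex of the cone, where $U$ vanishes and $QZ(s)/U(Z(s))$ is formally $0/0$. This is harmless precisely because $\phi$ in~\eqref{eq:phi} is chosen with $s_1>0$: on a neighbourhood of the origin one has $U(x)<s_1$, hence $V_{\la}\equiv 1$ and $\nabla V_{\la}\equiv 0$ there, so that neighbourhood contributes nothing to $\ol L$. (Equivalently, if one keeps the smooth transition region $s_1<U<s_2$ explicitly in play, the integrand acquires an extra nonnegative factor $\phi'(U(Z(s)))$, which leaves the sign analysis above unchanged.)
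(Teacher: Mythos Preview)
Your proof is correct and follows essentially the same route as the paper: decompose $\ol L$ over the faces, use the Skorohod property that $L_j$ increases only on $D_j$, invoke condition (ii) to get $(R'QZ)_j\le 0$ there, and combine with the nonnegativity of $V_\la$ and $U$. Your extra paragraph handling the vertex (where $U$ vanishes) via the cutoff $\phi$ is a point the paper glosses over, so if anything you are slightly more careful.
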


\begin{lemma} For $\la < \La$, there exist $r(\la), k(\la) > 0$ such that for $x \in D$, $\norm{x} \ge r(\la)$, we have: $\be_{\la}(x) < - k(\la)$. 
\label{lemma:bounds-for-drift}
\end{lemma}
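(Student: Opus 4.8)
The plan is to write $\be_{\la}$ as a sum of a part invariant under the scaling $x\mapsto tx$, $t>0$, and a lower-order remainder, and then to minimize the invariant part over the unit sphere. Recall that $\be_{\la}(x)=a(x)\la^2-b(x)$, where $a(x)=x'QAQx/(x'Qx)$, $-b(x)=\theta(x)+x'Q\mu/U(x)$, and $\theta(x)=\frac12\SL_{i=1}^d\SL_{j=1}^d a_{ij}\frac{\pa^2U(x)}{\pa x_i\pa x_j}$. The functions $a$ and $x\mapsto x'Q\mu/U(x)$ are homogeneous of degree $0$, whereas $\theta$ is homogeneous of degree $-1$. Writing $g_{\la}(x):=a(x)\la^2+x'Q\mu/U(x)$ for the degree-$0$ part, one has the exact identity $\be_{\la}(x)=g_{\la}\bigl(x/\norm{x}\bigr)+\theta(x)$ for every $x\in D\setminus\{0\}$.

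First I would bound the remainder. From the explicit formula $\frac{\pa^2U(x)}{\pa x_i\pa x_j}=U(x)^{-3}\bigl(q_{ij}(x'Qx)-(Qxx'Q)_{ij}\bigr)$ computed above, the numerator is a homogeneous quadratic, hence $O(\norm{x}^2)$, while $U(x)^3\ge K^3\norm{x}^3$ by the definition~\eqref{eq:K} of $K$ and condition (i) of Theorem~\ref{thm:main}. Hence $|\theta(x)|\le c_8/\norm{x}$ for all $x\in D\setminus\{0\}$ and some constant $c_8>0$, so that $\theta(x)\to0$ uniformly as $\norm{x}\to\infty$ inside $D$. Next I would analyze $g_{\la}$ on the compact set $S:=\{x\in D\mid\norm{x}=1\}$. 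On $S$ the quantities $x'Qx$, $U(x)$ and $x'QAQx=(Qx)'A(Qx)$ are continuous and, by condition (i) together with positive definiteness of $A$ and nonsingularity of $Q$ (cf.\ the Remark following Theorem~\ref{thm:main}), strictly positive; hence $g_{\la}$ is continuous on $S$ and attains its maximum there. The crux is that $\max_{x\in S}g_{\la}(x)<0$ precisely when $\la<\La$: clearing the positive denominators and using $x'Qx=U(x)^2$, the inequality $g_{\la}(x)<0$ rearranges into the statement that $\la$ lies strictly below the scale-invariant ratio whose infimum over $D\setminus\{0\}$ is, by the construction of~\eqref{eq:La}, equal to $\La$ (and this infimum is positive and attained on $S$, again by the cited Remark).

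Finally I would combine the two estimates. For $\la<\La$ set $2k_0:=-\max_{x\in S}g_{\la}(x)>0$; then for any $x\in D$ with $\norm{x}=t$,
$$
\be_{\la}(x)=g_{\la}(x/t)+\theta(x)\le-2k_0+c_8/t\le-k_0\qquad\mbox{whenever}\qquad t\ge c_8/k_0 .
$$
Since the formula for $\be_{\la}$ was obtained in the region $\norm{x}\ge s_2$, where $V_{\la}(x)=e^{\la U(x)}$, it suffices to take $r(\la):=\max\{s_2,\,c_8/k_0\}$ and $k(\la):=k_0$. I expect the main obstacle to be the equivalence asserted in the previous paragraph: one has to check that the denominators do not degenerate on $S$ (so that $g_{\la}$ is genuinely continuous with an attained maximum) and that the negativity threshold is exactly the constant $\La$ fixed in~\eqref{eq:La}; the decay bound for $\theta$ is then a routine homogeneity count.
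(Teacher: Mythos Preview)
Your approach is essentially the paper's: both isolate the lower-order term $\theta$ (homogeneous of degree $-1$, hence $\to 0$) and reduce the remaining degree-zero expression to a compactness argument on $\{x\in D:\norm{x}=1\}$. The paper packages this by setting $\La(x):=b(x)/a(x)$, observing $\varliminf_{\norm{x}\to\infty}\La(x)=\La$, and then factoring $-\be_\la(x)=a(x)\la(\La(x)-\la)$; you package it via $g_\la$ and $\max_S g_\la$. These are equivalent reformulations of the same idea.

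One point you rightly flagged as the main obstacle deserves care. With the displayed formula $\be_\la(x)=a(x)\la^2-b(x)$ taken literally, your rearrangement gives $\la^2$ below the scale-invariant ratio, not $\la$, so the threshold would come out as $\sqrt{\La/2}$ rather than $\La$. This is a typo in the paper's display: the It\^o computation actually produces $\be_\la(x)=\tfrac12 a(x)\la^2+\la\,\theta(x)+\la\,x'Q\mu/U(x)$ (note the extra factor $\la$ on the first-order terms and the $\tfrac12$ on the quadratic term), and the paper's own proof silently switches to $\be_\la(x)=a(x)\la^2-\la b(x)$ when it factors. With the corrected expression your $g_\la$ becomes $g_\la(x)=\la\bigl(\tfrac{\la}{2}a(x)+x'Q\mu/U(x)\bigr)$, and then $\max_S g_\la<0$ is indeed equivalent to $\la<\La$ as defined in~\eqref{eq:La}. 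Once that bookkeeping is fixed, your argument goes through exactly as written.
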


Assuming we proved these two lemmata, let us complete the proof of Theorem~\ref{thm:main}. Fix $\la \in (0, \La)$. Take a compact set $C = \{x \in D\setminus \norm{x} \le r(\la)\}$, with $r(\la)$ from Lemma~\ref{lemma:bounds-for-drift}. By Proposition~\ref{prop:cpt-petite}, this set is petite. 
The process
$$
V_{\la}(Z(t)) - V_{\la}(Z(0)) - \int_0^tV_{\la}(Z(s))\be_{\la}(Z(s))\md s,\ \ t \ge 0,
$$
is a local supermartingale, because $M = (M(t), t \ge 0)$ is a local martingale and by Lemma~\ref{lemma:L}. Now, 
$$
\be_{\la}(x)V_{\la}(x) \le -k(\la)V_{\la}(x)1_{D\setminus C}(x) + b_{\la}1_C(x),
$$
for $b_{\la} := \max_{x \in C}\left[\be_{\la}(x)V_{\la}(x)\right]$. This maximum is well defined, because $\be_{\la}V_{\la}$ is a continuous function, and $C$ is a compact set.
The rest of the proof is trivial. 

\medskip

{\it Proof of Lemma~\ref{lemma:L}.} We can write $\ol{L}(t)$ as 
$$
\md\ol{L}(t) = \la\SL_{i=1}^m\frac{(QZ(t)R)_j}{U(Z(t))}V_{\la}(Z(t))\md L_j(t).
$$
But each $L_j$ can grow only when $Z \in D_j$, and then $(R'QZ(t))_j = QZ(t)\cdot r_j \le 0$. It suffices to note that $V_{\la}(Z(t)) \ge 0$ and $U(Z(t)) \ge 0$.

\medskip 

{\it Proof of Lemma~\ref{lemma:bounds-for-drift}.} 
For each $x \in D\setminus\{0\}$ we have: if $b(x) > 0$, then
$$
\la < \La(x) := \frac{b(x)}{a(x)}\ \ \Ra\ \ \be_{\la}(x) < 0.
$$
Note that $\theta(x) \to 0$ as $\norm{x} \to \infty$. From this and conditions (i), (ii) and (iii) of Theorem~\ref{thm:main} it is straightforward to see that
$$
\varliminf\limits_{\substack{\norm{x} \to \infty\\x \in D}}\La(x) = \La.
$$
Also, there exist $r_0, c_0 > 0$ such that for $x \in D$, $\norm{x} \ge r_0$ we have: $a(x), b(x) \ge c_0$. Now, fix $\la \in (0, \La)$. Then there exists $\de > 0$ such that $\de \le \la \le \La - 2\de$, and there exists $r(\la)$ such that for $x \in D,\ \norm{x} \ge r(\la)$ we have: $\La(x) \ge \La - \de$. Without loss of generality, we assume $r(\la) \ge r_0$. Now, for such $x$ we have:
$$
-\be_{\la}(x) \!=\! -a(x)\la^2 \!+\! b(x)\la = a(x)\la(- \la\! +\! \La(x)) \ge c_0\de((\La \!-\! \de) \!-\! (\La \!-\! 2\de)) \ge c_0\de^2.
$$
This completes the proof of Lemma~\ref{lemma:bounds-for-drift}. 

\section{Systems of Competing Brownian Particles}

\subsection{Classical Systems: Definitions and Background}

In this subsection, we use definitions from \cite{BFK2005}. Assume the usual setting: a filtered probability space $(\Oa, \CF, (\CF_t)_{t \ge 0}, \MP)$ with the filtration satisfying the usual conditions.  Let $N \ge 2$ (the number of particles). Fix parameters $g_1, \ldots, g_N \in \BR$ and $\si_1, \ldots, \si_N > 0$. We wish to define a system of $N$ Brownian particles in which the $k$th smallest particle moves a Brownian motion with drift $g_k$ and diffusion $\si_k^2$. 
Applications include: (i) mathematical finance, namely modeling the real-world feature of stocks with smaller capitalizations having larger growth rates and larger volatilities; it suffcies to take decreasing sequences $(g_k)$ and $(\si_k^2)$; (ii) diffusion limits of a certain type of exclusion processes, namely asymmetrically colliding random walks, see \cite{KPS2012}.

\begin{defn} Take i.i.d. standard $(\CF_t)_{t \ge 0}$-Brownian motions $W_1, \ldots, W_N$. For a continuous $\BR^N$-valued process 
$X = (X(t),\ t \ge 0)$, $X(t) = (X_1(t), \ldots, X_N(t))'$, let us define $\mP_t,\ t \ge 0$, the {\it ranking permutation} for the vector $X(t)$: this is a permutation on $\{1, \ldots, N\}$, such that:

(i) $X_{\mP_t(i)}(t) \le X_{\mP_t(j)}(t)$ for $1 \le i < j \le N$; 

(ii) if $1 \le i < j \le N$ and $X_{\mP_t(i)}(t) = X_{\mP_t(j)}(t)$, then $\mP_t(i) < \mP_t(j)$. 

Suppose the process $X$ satisfies the following SDE:
\begin{equation}
\label{mainSDE}
dX_i(t) = \SL_{k=1}^N1(\mP_t(k) = i)\left[g_k\, \md t + \si_k\, \md W_i(t)\right],\quad i = 1, \ldots, N.
\end{equation}
Then this process $X$ is called a {\it classical system of $N$ competing Brownian particles} with {\it drift coefficients} $g_1, \ldots, g_N$ and {\it diffusion coefficients} $\si_1^2, \ldots, \si_N^2$. For $i = 1, \ldots, N$, the component $X_i = (X_i(t), t \ge 0)$ is called the {\it $i$th named particle}. For $k = 1, \ldots, N$, the process
$$
Y_k = (Y_k(t),\ t \ge 0),\ \ Y_k(t) := X_{\mP_t(k)}(t) \equiv X_{(k)}(t),
$$
is called the {\it $k$th ranked particle}. They satisfy
$Y_1(t) \le Y_2(t) \le \ldots \le Y_N(t)$, $t \ge 0$. If $\mP_t(k) = i$, then we say that the particle $X_i(t) = Y_k(t)$ at time $t$ has {\it name} $i$ and {\it rank} $k$. 
\label{classical}
\end{defn}

Weak existence and uniqueness in law was established in \cite{BFK2005}. Consider the {\it gap process}: an $\BR^{N-1}_+$-valued process defined by
$$
Z = (Z(t), t \ge 0),\ \ Z(t) = (Z_1(t), \ldots, Z_{N-1}(t))', \ \ Z_k(t) = Y_{k+1}(t) - Y_k(t).
$$
It was shown in \cite{BFK2005} that this is an $\SRBM^{N-1}(R, \mu, A)$ in the orthant $S = \BR_+^{N-1}$ with parameters 
\begin{equation}
\label{R12}
R = 
\begin{bmatrix}
1 & -1/2 & 0 & 0 & \ldots & 0 & 0\\
-1/2 & 1 & -1/2 & 0 & \ldots & 0 & 0\\
0 & -1/2 & 1 & 0 & \ldots & 0 & 0\\
\vdots & \vdots & \vdots & \vdots & \ddots & \ddots & \ddots\\
0 & 0 & 0 & 0 & \ldots & 1 & -1/2\\
0 & 0 & 0 & 0 & \ldots & -1/2 & 1
\end{bmatrix},
\end{equation}
\begin{equation}
\label{mu}
\mu = \left(g_2 - g_1, g_3 - g_4, \ldots, g_N - g_{N-1}\right)',
\end{equation}
\begin{equation}
\label{A}
A = 
\begin{bmatrix}
\si_1^2 + \si_2^2 & -\si_2^2 & 0 & 0 & \ldots & 0 & 0\\
-\si_2^2 & \si_2^2 + \si_3^2 & -\si_3^2 & 0 & \ldots & 0 & 0\\
0 & -\si_3^2 & \si_3^2 + \si_4^2 & -\si_4^2 & \ldots & 0 & 0\\
\vdots & \vdots & \vdots & \vdots & \ddots & \vdots & \vdots\\
0 & 0 & 0 & 0 & \ldots & \si_{N-2}^2 + \si_{N-1}^2 & -\si_{N-1}^2\\
0 & 0 & 0 & 0 & \ldots & -\si_{N-1}^2 & \si_{N-1}^2 + \si_N^2
\end{bmatrix}.\nonumber
\end{equation}

\subsection{Main Results} In this subsection, we present results about the gap process: existence of a stationary distribution, Lyapunov functions and tail estimates. Let $\ol{g}_k := \left(g_1 + \ldots + g_k\right)/k$ for $k = 1, \ldots, N$. 

\begin{prop} The gap process has a stationary distribution if and only if 
\begin{equation}
\label{eq:gap-recurrence}
\ol{g}_k > \ol{g}_N\ \ \mathrm{for } k = 1, \ldots, N-1.
\end{equation}
In this case, it is $V$-uniformly ergodic with a certain function $V : \BR^{N-1}_+ \to [1, \infty)$. 
\end{prop}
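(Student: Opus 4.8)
The plan is to obtain the ``if'' direction from Corollary~\ref{cor:nonsingular-M}, and the ``only if'' direction from the general fact, recalled in Section~3 and proved in \cite[Appendix C]{BDH2010}, that existence of a stationary distribution for an $\SRBM$ in the orthant forces $R$ to be nonsingular with $R^{-1}\mu < 0$ (the gap process is itself a well-defined $\SRBM$ by \cite{BFK2005}). Both directions then come down to two elementary facts about the matrix $R$ of~\eqref{R12}: it is a reflection nonsingular $\CM$-matrix that happens already to be symmetric, and $R^{-1}\mu < 0$ is equivalent to~\eqref{eq:gap-recurrence}.

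For the first fact, write $R = I_{N-1} - B$, where $B \ge 0$ has entries $1/2$ on the two off-diagonals and $0$ elsewhere. Since $B$ has spectral radius $\cos(\pi/N) < 1$, the matrix $R$ and each of its principal submatrices $[R]_I$ (of the same shape, with a smaller nonnegative off-diagonal part) are nonsingular $\CM$-matrices and hence have entrywise nonnegative, nonsingular inverses; thus $u := [R]_I^{-1}\munit > 0$ and $[R]_I u = \munit > 0$, so every $[R]_I$ is an $\CS$-matrix. Therefore $R$ is completely-$\CS$; being also a $\CZ$-matrix with unit diagonal, it is a reflection nonsingular $\CM$-matrix, and it is symmetric. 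We may then take $C = I_{N-1}$ in Corollary~\ref{cor:nonsingular-M}, so $\ol{R} = R$ and $U(x) = [x'R^{-1}x]^{1/2}$.

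For the second fact, solve $Rb = -\mu$ explicitly, where $\mu_k = g_{k+1} - g_k$ by~\eqref{mu}. With the convention $b_0 := b_N := 0$, the system reads $b_k - \tfrac12 b_{k-1} - \tfrac12 b_{k+1} = g_k - g_{k+1}$, i.e.\ $b_{k+1} - 2b_k + b_{k-1} = 2(g_{k+1} - g_k)$ for $k = 1, \ldots, N-1$. Telescoping the first differences $d_j := b_j - b_{j-1}$ gives $d_j = d_1 + 2(g_j - g_1)$, whence $b_k = k\,d_1 + 2k(\ol{g}_k - g_1)$; imposing $b_N = 0$ forces $d_1 = 2(g_1 - \ol{g}_N)$, so that $-R^{-1}\mu = b$ with $b_k = 2k(\ol{g}_k - \ol{g}_N)$ for $k = 1, \ldots, N-1$. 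As $k > 0$, this yields $R^{-1}\mu < 0$ if and only if $\ol{g}_k > \ol{g}_N$ for all $k$, which is precisely~\eqref{eq:gap-recurrence}. The ``if'' direction follows at once: under~\eqref{eq:gap-recurrence}, Corollary~\ref{cor:nonsingular-M} applies with $Q = R^{-1}$, yielding a unique stationary distribution $\pi$, $V_{\la}$-uniform ergodicity for $\la \in (0, \La)$, and $(\pi, V_{\la}) < \infty$; the ``only if'' direction follows since a stationary distribution forces $R^{-1}\mu < 0$, hence~\eqref{eq:gap-recurrence}.

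I expect the only genuine obstacle to be the necessity (``only if'') half, which is not produced by the Lyapunov-function machinery of this paper and must be imported from \cite[Appendix C]{BDH2010}; alternatively one could argue it directly, showing that if~\eqref{eq:gap-recurrence} fails then some partial sum of ranked particles has nonnegative asymptotic drift, so the corresponding gap cannot be positive recurrent, via the strong law of large numbers for the ranked processes as in \cite{BFK2005}. Everything else — the $\CM$-matrix/completely-$\CS$ bookkeeping for $R$ and the telescoping solution of $Rb = -\mu$ — is routine.
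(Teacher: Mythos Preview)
Your proposal is correct and follows essentially the same route as the paper: both verify that $R$ is a reflection nonsingular $\CM$-matrix, compute $-R^{-1}\mu$ explicitly (the paper states $b_k = g_1+\cdots+g_k - k\ol{g}_N$, which equals your $k(\ol{g}_k-\ol{g}_N)$, so the formulas agree), and invoke the necessary condition $R^{-1}\mu<0$ from \cite[Appendix C]{BDH2010} for the ``only if'' half. The one genuine difference is the source of $V$-uniform ergodicity: the paper cites the fluid-path condition of \cite{Chen1996} together with \cite{BL2007}, whereas you appeal to the paper's own Corollary~\ref{cor:nonsingular-M}. Your choice is more self-contained and immediately delivers the explicit $V_\la$ (which the paper derives separately right after the proposition), while the paper's choice keeps the proposition logically independent of Theorem~\ref{thm:main}; either is fine here.
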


\begin{proof} This result was already proved in \cite{BFK2005, Ichiba11, MyOwn6}, but for the sake of completeness we present a sketch of proof. The matrix $R$ is a reflection nonsingular $\CM$-matrix, and 
\begin{equation}
\label{eq:R-mu}
-R^{-1}\mu = 2\left(g_1 - \ol{g}_N, g_1 + g_2 - 2\ol{g}_N, \ldots, g_1 + \ldots + g_{N-1} - (N-1)\ol{g}_N\right)'.
\end{equation}
Define the quantities
\begin{equation}
\label{eq:def-b}
b_i = g_1 + g_2 + \ldots + g_i - i\ol{g}_N,\quad i = 1, \ldots, N-1.
\end{equation}
Then we can rewrite~\eqref{eq:R-mu} as
$$
-R^{-1}\mu = 2b,\ \ b = (b_1, \ldots, b_{N-1})'.
$$
Therefore, the gap process has a stationary distribution if and only if each component of this vector is strictly positive, which is equivalent to the condition~\eqref{eq:gap-recurrence}. In this case, the fluid path condition holds by \cite{Chen1996}, and so by \cite{BL2007} the gap process is $V$-uniformly ergodic for a certain Lyapunov function $V : \BR^{N-1}_+ \to [1, \infty)$. 
\end{proof}

From Corollary~\ref{cor:nonsingular-M}, we get a concrete Lyapunov function $V$, namely:
$$
V_{\la}(x) = e^{\la\phi(U(x))},\ \ U(x) := \left[x'R^{-1}x\right]^{1/2},
$$
where $\phi$ is defined in~\eqref{eq:phi}. We use the fact that the matrix $R$ is symmetric, so in the notation of Corollary~\ref{cor:nonsingular-M} we have: $C = I_{N-1}$ and $\ol{R} = R$. Here, we must have $\la < \La$, where
$$
\La := 2\min\limits_{x \in \BR^d_+\setminus\{0\}}\frac{|R^{-1}\mu\cdot x|U(x)}{x'R^{-1}AR^{-1}x}.
$$ 
Let us try to estimate the tail of the stationary distribution $\pi$. 

\begin{thm} Using the definition of $b_1, \ldots, b_{N-1}$ from~\eqref{eq:def-b}, we have:
$$
\int_{\BR^{N-1}_+}e^{\rho\norm{x}}\pi(\md x) < \infty\ \ \mathrm{for}\ \ \rho \in (0, \rho_0),\ \ \rho_0 := \frac2{\pi^2}\frac{\min(b_1, \ldots, b_{N-1})}{\norm{A}}N^{-2}.
$$
\end{thm}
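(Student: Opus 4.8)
The plan is to read this off from Corollary~\ref{cor:nonsingular-M} (which applies here with $\ol R = R$, since $R$ is symmetric, so $C = I_{N-1}$) together with the discussion right after Theorem~\ref{thm:main}. That corollary already gives $(\pi, V_\la) < \infty$ for $\la \in (0, \La)$, where $V_\la(x) = e^{\la\phi(U(x))}$, $U(x) = [x'R^{-1}x]^{1/2}$, and $\La = 2\min_{x \in \BR^{N-1}_+\setminus\{0\}}|R^{-1}\mu\cdot x|\,U(x)/(x'R^{-1}AR^{-1}x)$; and since $V_\la(x) \ge e^{\la K\norm{x}}$ for large $\norm{x}$, with $K = \min_{x \in \BR^{N-1}_+,\,\norm{x}=1}U(x)$, one has $\int e^{\rho\norm{x}}\pi(\md x) < \infty$ for every $\rho < \La K$. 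So the task reduces to bounding $\La K$ from below, which I would do by estimating $K$ and $\La$ separately in terms of $N$, $b_{\min} := \min(b_1,\ldots,b_{N-1})$ and $\norm{A}$.

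First, $K \ge 1$. The matrix $2R$ is the $(N-1)$-point discrete Dirichlet Laplacian, so $R^{-1} = 2(2R)^{-1}$ has all entries nonnegative and diagonal entries $(R^{-1})_{ii} = 2i(N-i)/N \ge 2(N-1)/N \ge 1$. Hence for $x \ge 0$, discarding the nonnegative off-diagonal contributions, $x'R^{-1}x \ge \SL_i (R^{-1})_{ii}x_i^2 \ge \norm{x}^2$, i.e.\ $U(x) \ge \norm{x}$ on the orthant, so $K \ge 1$.

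Next I would bound $\La$ from below. For the numerator of the defining ratio, $-R^{-1}\mu = 2b$ with $b = (b_1,\ldots,b_{N-1})' > 0$ by~\eqref{eq:R-mu}--\eqref{eq:def-b}, so for $x \ge 0$ we get $|R^{-1}\mu\cdot x| = 2\SL_i b_ix_i \ge 2b_{\min}\norm{x}$. For the denominator I would exploit that $A$ and $R$ are weighted/scaled versions of the same one-dimensional Laplacian: writing $y := R^{-1}x$, so that $x'R^{-1}AR^{-1}x = y'Ay$, and using $y'Ay = \SL_{i=0}^{N-1}\si_{i+1}^2(y_{i+1}-y_i)^2$ (with $y_0 = y_N = 0$), each weight is at most $\max_k\si_k^2 \le \max_i A_{ii} \le \norm{A}$, so
$$
y'Ay \le \norm{A}\SL_{i=0}^{N-1}(y_{i+1}-y_i)^2 = \norm{A}\,y'(2R)y = 2\norm{A}\,y'Ry = 2\norm{A}\,x'R^{-1}x = 2\norm{A}\,U(x)^2.
$$
Combining, for every $x \in \BR^{N-1}_+\setminus\{0\}$,
$$
\frac{|R^{-1}\mu\cdot x|\,U(x)}{x'R^{-1}AR^{-1}x} \ge \frac{2b_{\min}\norm{x}\,U(x)}{2\norm{A}\,U(x)^2} = \frac{b_{\min}}{\norm{A}}\cdot\frac{\norm{x}}{U(x)} \ge \frac{b_{\min}}{\norm{A}}\,\la_{\min}(R)^{1/2},
$$
the last inequality because $U(x)^2 = x'R^{-1}x \le \la_{\max}(R^{-1})\norm{x}^2 = \norm{x}^2/\la_{\min}(R)$. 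Finally $R = I_{N-1} - \tfrac12 T$ with $T$ the path-graph adjacency matrix, so $\la_{\min}(R) = 1 - \cos(\pi/N) = 2\sin^2(\pi/(2N)) \ge 2/N^2$ by $\sin t \ge 2t/\pi$ on $[0,\pi/2]$. Putting everything together, $\La K = \La \ge (2b_{\min}/\norm{A})\,\la_{\min}(R)^{1/2} \ge 2\sqrt 2\,b_{\min}/(\norm{A}\,N)$, which exceeds $\rho_0$; one then just records the weaker (and cleaner) bound $\rho_0 = \tfrac{2}{\pi^2}\tfrac{b_{\min}}{\norm{A}}N^{-2}$ claimed in the statement.

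The step I expect to be the real obstacle is the denominator estimate. The naive spectral bound $x'R^{-1}AR^{-1}x \le \norm{A}\,\la_{\max}(R^{-1})^2\norm{x}^2 = \norm{A}\,\la_{\min}(R)^{-2}\norm{x}^2$ is immediate but costs two factors of $N$ (it would only give $\La K \gtrsim b_{\min}\la_{\min}(R)^2/\norm{A} \sim N^{-4}$), so one must use the structural fact that $R^{-1}$ is (up to a factor $2$) the Green's function and $A$ is a weighted Dirichlet form of the same path graph, in order to replace $\la_{\min}(R)^{-2}$ by the dimension-free constant $2$. The other thing worth flagging is that restricting the minimum in the definition of $\La$ to the orthant is essential at two points: it is what makes the entrywise nonnegativity of $R^{-1}$ usable (for $K \ge 1$) and the strict positivity $b > 0$ usable (for $|R^{-1}\mu\cdot x| \ge 2b_{\min}\norm{x}$).
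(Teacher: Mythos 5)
Your proof is correct, and on the decisive step it takes a genuinely different --- and sharper --- route than the paper. The common skeleton is the same: both arguments reduce the theorem to a lower bound on $K\La$ via Corollary~\ref{cor:nonsingular-M} and the tail discussion following Theorem~\ref{thm:main}; both show $K\ge 1$ (the paper by minimizing $x'R^{-1}x$ over the simplex $\{x\ge 0,\ \munit\cdot x=1\}$, which gives $1/(\munit'R\munit)=1$, you via entrywise nonnegativity of $R^{-1}$ together with $(R^{-1})_{ii}=2i(N-i)/N\ge 1$ --- both are fine); and both bound the numerator by $|R^{-1}\mu\cdot x|=2\sum b_ix_i\ge 2\min_i b_i$ after suitable normalization. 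The divergence is exactly where you flagged it: the paper estimates the denominator by the crude operator-norm bound $x'R^{-1}AR^{-1}x\le\norm{R^{-1}}^2\norm{A}\norm{x}^2$ combined with the explicit spectrum $\norm{R^{-1}}=(1-\cos(\pi/N))^{-1}$, whereas you use the Dirichlet-form identity $y'Ay=\sum\si_{i+1}^2(y_{i+1}-y_i)^2\le\norm{A}\,y'(2R)y$ with $y=R^{-1}x$ to get $x'R^{-1}AR^{-1}x\le 2\norm{A}\,U(x)^2$, with a dimension-free constant. This buys a full power of $N$: your route yields $\La\gtrsim b_{\min}/(\norm{A}N)$, while the paper's denominator bound actually delivers only $\La\gtrsim \la_{\min}(R)^2 b_{\min}/\norm{A}\sim N^{-4}$. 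That matters, because the paper's final numerical step is flawed: it invokes $1-\cos(\pi/N)\ge\tfrac12(\pi/N)^2$, which is the wrong direction ($1-\cos t\le t^2/2$; the correct Jordan-type inequality is $1-\cos t\ge 2t^2/\pi^2$), and even after correcting this, the squared $\norm{R^{-1}}$ leaves a bound of order $N^{-4}$, not the stated $N^{-2}$. So your argument does not merely reprove the theorem by other means; it is the one that legitimately establishes the stated $\rho_0$ (indeed a stronger, order-$N^{-1}$ version of it).
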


\begin{proof} From the results of Theorem~\ref{thm:main}, we have: 
$$
\int_{\BR^{N-1}_+}e^{\rho\norm{x}}\pi(\mathrm{x}) < \infty\ \ \mathrm{for}\ \ \rho \in (0, K\La),
$$
where $K$ is defined in~\eqref{eq:K} (in this notation, $Q = R^{-1}$). Now, let us estimate $K\La$ from below. Define $\Sigma := \{x \in \BR^{N-1}_+\mid x_1 + \ldots + x_{N-1} = 1\}$. 

\begin{lemma} (i) The norm of the matrix $R^{-1}$ is equal to 
\begin{equation}
\label{eq:max-eigen}
\norm{R^{-1}} = \la^{-1}_1 = \left(1 - \cos\frac{\pi}N\right)^{-1};
\end{equation}

(ii) $U(x) \ge 1$ for $x \in \Sigma$;

(iii) $\left|R^{-1}\mu\cdot x\right| \ge 2\min(b_1, \ldots, b_{N-1})$ for $x \in \Sigma$;

(iv) $x'R^{-1}AR^{-1}x \le \norm{R^{-1}}^2\norm{A}$ for $x \in \Sigma$.
\label{lemma:many}
\end{lemma}

Suppose we proved Lemma~\ref{lemma:many}. From part (ii) we get: $K \ge 1$. Using (ii)-(iv), we obtain:
$$
\La \ge 2\frac{2\min(b_1, \ldots, b_{N-1})}{\norm{R^{-1}}^2\norm{A}}.
$$
Finally, using (i), we get:
$$
\La \ge 4\left(1 - \cos\frac{\pi}N\right)^{2}\frac{\min(b_1, \ldots, b_{N-1})}{\norm{A}}.
$$
But 
$$
1 - \cos\frac{\pi}N \ge \frac12\left(\frac{\pi}N\right)^2 = \frac{\pi^2}2\frac1{N^2}.
$$
Therefore, 
$$
\La \ge \frac2{\pi^2}\frac{\min(b_1, \ldots, b_{N-1})}{\norm{A}}N^{-2}. 
$$
The rest of the proof is trivial.

\medskip

{\it Proof of Lemma~\ref{lemma:many}.} (i) The eigenvalues of $R$ are given by (see, e.g., \cite{K2008})
$$
\la_k = 1 - \cos\frac{k\pi}N,\quad  k = 1, \ldots, N - 1.
$$
The eigenvalues of $R^{-1}$ are $\la_k^{-1},\ k = 1, \ldots, N - 1$. The matrix $R^{-1}$ is symmetric, so its norm is equal to the absolute value of its maximal eigenvalue. Therefore, we get~\eqref{eq:max-eigen}. 

(ii) The matrix $R^{-1}$ is symmetric and positive definite. Solving the optimization problem $x'R^{-1}x \to \min,\ x\cdot\munit = 1$, we get: the minimum is $\munit'R\munit$, which is equal to the sum of all elements of $R$, which, in turn, equals $1$.

(iii) Follows from the fact that $R^{-1}\mu < 0$ and~\eqref{eq:R-mu}. 

(iv) Follows from the multiplicative property of the Euclidean norm, and from the fact that for $x \in \Sigma$ we have: $\norm{x}^2 = x_1^2 + \cdots + x_{N-1}^2 \le (x_1 + \cdots + x_{N-1}^2)^2 = 1$. 
\end{proof}

\subsection{Asymmetric Collisions} One can generalize the classical system of competing Brownian particles from Definition~\ref{classical} in many ways. Let us describe one of these generalizations. Consider a classical system of competing Brownian particles, as in Definition~\ref{classical}. For $k = 1, \ldots, N-1$, let $L_{(k, k+1)} = (L_{(k, k+1)}(t), t \ge 0)$ 
be the semimartingale local time process at zero of the process $Z_k = Y_{k+1} - Y_k$. We shall call this the {\it collision local time} of the particles $Y_k$ and $Y_{k+1}$. For notational convenience, let $L_{(0, 1)}(t) \equiv 0$ and $L_{(N, N+1)}(t) \equiv 0$. 
Let
$$
B_k(t) = \SL_{i=1}^N\int_0^t1(\mP_s(k) = i){\mathrm{d}}W_i(s),\ \ k = 1, \ldots, N,\ \ t \ge 0.
$$
It can be checked that $\langle B_k, B_l\rangle_t \equiv \de_{kl}t$, so $B_1, \ldots, B_N$ are i.i.d. standard Brownian motions. 
As shown in \cite{BFK2005, Ichiba11}, the ranked particles $Y_1, \ldots, Y_N$ have the following dynamics:
$$
Y_k(t) = Y_k(0) + g_kt + \si_kB_k(t) - \frac12 L_{(k, k+1)}(t) + \frac12 L_{(k-1, k)}(t),\ \ k = 1, \ldots, N.
$$
The collision local time $L_{(k, k+1)}$ has a physical meaning of the push exerted when the particles $Y_k$ and $Y_{k+1}$ collide, which is needed to keep the particle $Y_{k+1}$ above the particle $Y_k$. Note that the coefficients at the local time terms are $\pm 1/2$. This means that the collision local time $L_{(k, k+1)}$ is split evenly between the two colliding particles: the lower-ranked particle $Y_k$ receives one half of this local time, which pushes it down, and the higher-ranked particle $Y_{k+1}$ receives the other one half of this local time, which pushes it up. In the paper \cite{KPS2012}, they considered systems of Brownian particles when this collision local time is split unevenly: the part $q^+_{k+1}L_{(k, k+1)}(t)$ goes to the upper particle $Y_{k+1}$, and the part $q^-_kL_{(k, k+1)}(t)$ goes to the lower particle $Y_k$. Let us give a formal definition.

\begin{defn} Fix $N \ge 2$, the number of particles. Take drift and diffusion coefficients
$g_1, \ldots, g_N \in \BR$, $\si_1, \ldots, \si_N > 0$, and, in addition, take {\it parameters of collision}
$$
q^{\pm}_1,\ldots, q^{\pm}_N \in (0, 1),\ \ q^+_{k+1} + q^-_k = 1,\ \ k = 1, \ldots, N-1.
$$
Consider a continuous adapted $\BR^N$-valued process $Y = \left(Y(t) = (Y_1(t), \ldots, Y_N(t))', t \ge 0\right)$. Take other $N-1$ continuous adapted real-valued nondecreasing processes
$$
L_{(k, k+1)} = (L_{(k, k+1)}(t), t \ge 0),\ \ k = 1, \ldots, N-1,
$$
with $L_{(k, k+1)}(0) = 0$, which can increase only when $Y_{k+1} = Y_k$:
$$
\int_0^{\infty}1(Y_{k+1}(t) > Y_k(t))\md L_{(k, k+1)}(t) = 0,\ \ k = 1, \ldots, N-1.
$$
Let $L_{(0, 1)}(t) \equiv 0$ and $L_{(N, N+1)}(t) \equiv 0$. Assume that
\begin{equation}
\label{asymm}
Y_k(t) = Y_k(0) + g_kt + \si_kB_k(t) - q^-_kL_{(k, k+1)}(t) + q^+_kL_{(k-1, k)}(t),\quad k = 1, \ldots, N.
\end{equation}
Then the process $Y$ is called the {\it system of competing Brownian particles with asymmetric collisions}. The gap process is defined similarly to the case of a classical system. 
\label{defn:asymm}
\end{defn}

Strong existence and pathwise uniqueness for these systems are shown in \cite[Section 2.1]{KPS2012}. When $q^{\pm}_1 = q^{\pm}_2 = \ldots = 1/2$, we are back in the case of symmetric collisions. 

\begin{rmk}
For systems of competing Brownian particles with asymmetric collisions,
we defined only ranked particles $Y_1, \ldots, Y_N$. It is, however, possible to define named particles $X_1, \ldots, X_N$ for the case of asymmetric collisions. This is done in \cite[Section 2.4]{KPS2012}. The construction works up to the first moment of a triple collision. A necessary and sufficient condition for a.s. absence of triple collisions is given in \cite{MyOwn3}. We will not make use of this construction in our article, instead working with ranked particles. 
\end{rmk}

It was shown in \cite{KPS2012} that the gap process for systems with asymmetric collisions, much like for the classical case, is an SRBM. Namely, it is an $\SRBM^{N-1}(R, \mu, A)$, where $\mu$ and $A$ are given by~\eqref{mu} and~\eqref{A}, and the reflection matrix $R$ is given by 
\begin{equation}
\label{R}
R = 
\begin{bmatrix}
1 & -q^-_2 & 0 & 0 & \ldots & 0 & 0\\
-q^+_2 & 1 & -q^-_3 & 0 & \ldots & 0 & 0\\
0 & -q^+_3 & 1 & -q^-_4 & \ldots & 0 & 0\\
\vdots & \vdots & \vdots & \vdots & \ddots & \vdots & \vdots\\
0 & 0 & 0 & 0 & \ldots & 1 & -q^-_{N-1}\\
0 & 0 & 0 & 0 & \ldots & -q^+_{N-1} & 1
\end{bmatrix}
\end{equation}
This matrix is also a reflection nonsingular $\CM$-matrix. Therefore, there exists a stationary distribution for this SRBM if and only if $R^{-1}\mu < 0$. In this case, we can apply the results of \cite{BL2007} again and conclude that the gap process is $V$-uniformly ergodic with a certain Lyapunov function $V : \BR^{N-1}_+ \to [1, \infty)$. 
Corollary~\ref{cor:nonsingular-M} allows us to find an explicit Lyapunov function and provide explicit tail estimates. A remark is in order: the matrix $R$ in~\eqref{R} in general is not symmetric, as opposed to the matrix $R$ in~\eqref{R12}. But for the following $(N-1)\times(N-1)$ diagonal matrix $C$, the matrix $\ol{R} = RC$ is diagonal:
$$
C = \diag\left(1, \frac{q^+_2}{q^-_2}, \frac{q^+_2q^+_3}{q^-_2q^-_3}, \ldots, \frac{q^+_2q^+_3\ldots q^+_{N-1}}{q^-_2q^-_3\ldots q^-_{N-1}}\right).
$$


\section*{Acknoweldgements}

The author would like to thank \textsc{Ioannis Karatzas} and \textsc{Ruth Williams} for help and useful discussion. Also, the author would like to thank two anonymous referees for pointing out misprints and useful comments which helped improve the paper. This research was partially supported by NSF Grants DMS 1007563, DMS 1308340, DMS 1409434, and DMS 1405210.

\medskip\noindent


\begin{thebibliography}{12}

\bibitem{ABD2001} \textsc{Rami Atar, Amarjit Budhiraja, Paul Dupuis} (2001). On Positive Recurrence of Constrained Diffusion Processes. \textit{Ann. Probab.} \textbf{29} (2), 979-1000.

\bibitem{Ichiba11} \textsc{Adrian D. Banner, E. Robert Fernholz, Tomoyuki Ichiba, Ioannis Karatzas, Vasilleos Papathanakos} (2011). Hybrid Atlas Models. \textit{Ann. Appl. Probab.} \textbf{21} (2), 609-644.

\bibitem{BFK2005} \textsc{Adrian Banner, E. Robert Fernholz, Ioannis Karatzas} (2005). Atlas Models of Equity Markets. \textit{Ann. Appl. Probab.} \textbf{15} (4), 2296-2330.

\bibitem{BDH2010} \textsc{Maury Bramson, Jim G. Dai, J. Michael Harrison} (2010).  Positive Recurrence of Reflecting Brownian Motions in Three Dimensions. \textit{Ann. Appl. Probab.} \textbf{20} (2), 753-783.

\bibitem{BD1999} \textsc{Amarjit Budhiraja, Paul Dupuis} (1999). Simple Necessary and Sufficient Conditions for the Stability of Constrained Processes. \textit{SIAM J . Appl. Math.} \textbf{59} (5), 1686-1700.

\bibitem{BL2007} \textsc{Amarjit Budhiraja, Chihoon Lee} (2007). Long Time Asymptotics for Constrained Diffusions in Polyhedral Domains. \textit{Stoch. Proc. Appl.} \textbf{117} (8), 1014-1036.

\bibitem{Chen1996} \textsc{Hong Chen} (1996). A Sufficient Condition for the Positive Recurrence of a Semimartingale Reflecting Brownian Motion in an Orthant. \textit{Ann. Appl. Probab.} \textbf{6} (3), 758-765.

\bibitem{DK2003} \textsc{Jim G. Dai, Thomas G. Kurtz} (1994). Characterization of the Stationary Distribution for a Semimartingale Reflecting Brownian Motion in a Convex Polyhedron. Working paper.

\bibitem{DW1995} \textsc{Jim G. Dai, Ruth J. Williams} (1995). Existence and Uniqueness of Semimartingale Reflecting Brownian Motions in Convex Polyhedra. \textit{Th. Probab. Appl.} \textbf{40} (1), 3-53.

\bibitem{DM2009} \textsc{Antonius B. Dieker, John Moriarty} (2009). Reflected Brownian Motion in a Wedge: Sum-of-Exponential Stationary Densities. \textit{Elec. Comm. Probab.} \textbf{14} (1), 1-16.

\bibitem{DFG2009} \textsc{Randal Douc, Gersende Fort, Arnaud Guillin} (2009). Subgeometric Rates of Convergence of $f$-Ergodic Strong Markov Processes. \textit{Stoch. Proc. Appl.} \textbf{119} (3), 897-923.

\bibitem{DMT1995} \textsc{Douglas Down, Sean P. Meyn, Richard L. Tweedie} (1995). Exponential and Uniform Ergodicity of Markov Processes. \textit{Ann. Probab.} \textbf{23} (4), 1671-1691.

\bibitem{DI1991} \textsc{Paul Dupuis., Hitoshi Ishii} (1991). On Lipschitz Continuity of the Solution Mapping to the Skorohod Problem, with Applications. \textit{Stoch. Stoch. Rep.} \textbf{35} (1), 31-62.

\bibitem{DI1993} \textsc{Paul Dupuis., Hitoshi Ishii} (1993). SDEs with Oblique Reflection on Nonsmooth Domains. \textit{Ann. Probab.} \textbf{21} (1), 554-580.

\bibitem{DW1994} \textsc{Paul Dupuis, Ruth J. Williams} (1994). Lyapunov Functions for Semimartingale Reflecting Brownian Motions. \textit{Ann. Probab.} \textbf{22} (2), 680-702.

\bibitem{FR2005} \textsc{Gersende Fort, Gareth O. Roberts} (2005). Subgeometric Ergodicity of Strong Markov Processes. \textit{Ann. Appl. Probab.} \textbf{15} (2), 1565-1589.

\bibitem{HH2009} \textsc{J. Michael Harrison, John J. Hasenbein} (2009). Reflected Brownian Motion in the Quadrant: Tail Behavior of the Stationary Distribution. \textit{Queueing Syst.} \textbf{61} (2-3), 113-138.

\bibitem{HR1981a} \textsc{J. Michael Harrison, Martin I. Reiman} (1981). Reflected Brownian Motion on an Orthant. \textit{Ann. Probab.} \textbf{9} (2), 302-308.

\bibitem{HW1987a} \textsc{J. Michael Harrison, Ruth J. Williams} (1987).  Multidimensional Reflected Brownian Motions Having Exponential Stationary Distributions. \textit{Ann. Probab.} \textbf{15} (1), 115-137.


\bibitem{KPS2012} \textsc{Ioannis Karatzas, Soumik Pal, Mykhaylo Shkolnikov} (2016). Systems of Brownian Particles with Asymmetric Collisions. \textit{Ann. Inst. H. Poincare Probab. Stat.} \textbf{52} (1), 323-354.



\bibitem{MeynBook} \textsc{Sean P. Meyn, Richard L. Tweedie} (1993). \textit{Markov Chains and Stochastic Stability.} Communications and Control Engineering Series. Springer.


\bibitem{MT1993a} \textsc{Sean P. Meyn, Richard L. Tweedie} (1993). Stability of Markovian Processes II: Continuous-Time Processes and Sampled Chains. \textit{Adv. Appl. Probab.} \textbf{25} (3), 487-517.

\bibitem{MT1993b} \textsc{Sean P. Meyn, Richard L. Tweedie} (1993). Stability of Markovian Processes III: Foster-Lyapunov Criteria for Continuous-Time Processes.      \textit{Adv. Appl. Probab.} \textbf{25} (3), 518-548.


\bibitem{MyOwn3} \textsc{Andrey Sarantsev} (2015). Triple and Simultaneous Collisions of Competing Brownian Particles. \textit{Elec. J. Probab.} \textbf{20} (29), 1-28.

\bibitem{MyOwn6} \textsc{Andrey Sarantsev} (2016). Infinite Systems of Competing Brownian Particles. Available at arXiv:1403.4229.

\bibitem{Wil1995} \textsc{Ruth J. Williams} (1995). Semimartingale Reflecting Brownian Motions in the Orthant. Stochastic Networks, \textit{IMA Vol. Math. Appl.} \textbf{31}, 125-137, Springer. 



\bibitem{HR1993} \textsc{David G. Hobson, Leonard C. G. Rogers} (1993). Recurrence and Transience of Reflecting Brownian Motion in the Quadrant. \textit{Math. Proc. Camb. Philos. Soc.} \textbf{113} (2), 387-399.







%
%
%
%
%
%
\end{thebibliography}
\end{document}